\providecommand{\U}[1]{\protect\rule{.1in}{.1in}}
\newtheorem{Proposition}[theorem]{Proposition}
\newtheorem{remark}[theorem]{Remark}
\begin{document}

\title{Stability and Accuracy analysis of the $\theta$ Method and 3-Point Time filter\thanks{
The research was partially supported by NSF grant DMS-2110379.}}
\author{
Nicholas Hurl\thanks{Department of Mathematics, Duquesne University, Pittsbugh, PA-15282 (hurln@duq.edu).}
\and Farjana Siddiqua\thanks{Department of Mathematics, University of Pittsburgh, Pittsburgh, PA-15260
  (fas41@pitt.edu ).}
\and Shuxian Xu\thanks{Department of Mathematics, University of Pittsburgh
  (shx34@pitt.edu).}
}
\maketitle

\begin{abstract}
This paper analyzes a $\theta$-method and 3-point time filter. This approach adds one additional line of code to the existing source code of $\theta$-method. We prove the method's $0$-stability, accuracy, and $A$-stability for both constant time step and variable time step. Some numerical tests are performed to validate the theoretical results. 

\end{abstract}
\begin{keywords}
 time filter, theta method, $A$-stability
\end{keywords}

\begin{AMS}
 65LO4, 65LO5, 65M06, 65M12, 65M22, 65M60, 76M10
\end{AMS}

\section{Introduction}
Time filters have been studied to improve stability of the 
Crank-Nicolson-Leapfrog in \cite{hurl2014stability,asselin1972frequency} and later to improve the accuracy of the Backward Euler method in \cite{guzel2018time,williams2009proposed}. The accuracy of the Backward Euler method can be increased by a method introduced by Guzel and Layton in \cite{guzel2018time}. Herein we examine the effects on $A$-stability and accuracy of a general 3-point filter applied to the $\theta$-method. The main result of this paper is analyzing the stability and accuracy of adding a post-processing step to the $\theta$ method. 
It is found that we need to add one more line to the source code that works for Backward Euler, Trapezoidal, or Forward Euler method to either increase stability or numerical accuracy or both. 
Consider the initial value problem (IVP),
$$y'(t)=f(t, y(t)),\quad \text{for} \quad t>0 \quad \text{and} \quad y(0)=y_0. $$
Denote the $n$th time step size by $k_n$. Let $t_{n+1}=t_n+k_n,\ \tau=\frac{k_n}{k_{n-1}},\  \nu$ be an algorithm parameter and $y_n$ be an approximation to $y(t_n)$. Let $y_{n+1}^*$ and $y_{n+1}$ denote unfiltered and filtered values, respectively. Discretize this IVP using $\theta$ method followed by a simple filter which is shown below (for constant time step):
\begin{equation} \label{11}
\begin{split}
     \text{Step 1}: y_{n+1}^* &=y_n+k((1-\theta) f(t_n,y_n)+\theta f(t_{n+1},y_{n+1}^*)) ,\\
     \text{Step 2}: y_{n+1}&=y_{n+1}^*-\frac{\nu}{2}\{ y_{n+1}^*-2y_n+y_{n-1}\}.
     \end{split}
\end{equation}
The combination of $\theta$ method and a 3-point filter produces a consistent approximation and achieves second-order accuracy for $\nu = 2\frac{2\theta-1}{2\theta+1}$ (Proposition 3.1). The method \eqref{11} is 0-stable for $-2 \leq \nu < 2$ and A-stable for $\theta\geq\frac{1}{2}$ and $ 2-4\theta\leq (2\theta+1)\nu\leq 4\theta-2$ (Proposition 3.2). Since Step 2 with $\nu = 2\frac{2\theta-1}{2\theta+1}$ has greater accuracy than Step 1, we can have an estimator which is the difference between pre-filter and post-filter
$$EST=\| y_{n+1}-y_{n+1}^*\| .$$
In Section 4, variable time step case is considered and the steps are as follows:
\begin{equation} \label{12}
\begin{split}
     \text{Step 1}: y_{n+1}^* &=y_n+k_n((1-\theta) f(t_n,y_n)+k_n \theta f(t_{n+1},y_{n+1}^*)) ,\\
     \text{Step 2}: y_{n+1}&=y^*_{n+1}-\frac{\nu}{1+\tau} \left( y^*_{n+1}- (1+\tau) y_n+\tau y_{n-1}\right).
\end{split}
\end{equation}
For $ \nu = \frac{\tau(1+\tau)(2\theta-1)}{2\theta\tau+1}$, \eqref{12} is second-order convergent (Proposition \ref{VariableTimeStepConsistency}). Recently in \cite{qin2022analysis}, a $\theta$ scheme with a time filter has been implemented. But it only considered a constant time step and is developed for specific applications in the unsteady Stokes-Darcy model.
In our paper, we are considering the general method with both constant and variable timesteps.
Numerical tests in Section 5 confirm the theoretical prediction of good accuracy with an appropriate choice of $\nu$. It is observed that we get a balance between stability and accuracy. For example, as we pick $\nu$ near $2$, the stability region gets bigger, but as we pick $\nu$ near $2$, the LTE goes to infinity. When $\theta>\frac{1}{2}$, we always have $A$- stability, and in that case, we would choose $\nu$ to get second-order accuracy. If $\theta<\frac{1}{2}$, we would not have $A$-stability. In this case, we choose $\nu$ to increase $A_0$- stability or $A_{\frac{\pi}{4}}$- stability rather than accuracy.
 
\section{Notations and Preliminaries}
In this section, we provide fundamental mathematical definitions and theorems.
\begin{definition}(Local Truncation Error) 
Local truncation error (LTE), $\tau_n$,  at step $n$ computed from the difference between the left- and the right-hand side of the equation for the increment $y_n\approx y_{n-1} + h A(t_{n-1}, y_{n-1}, h,f)$, where $k_n = t_n-t_{n-1}$:
\begin{equation*}
    \tau_n = {y_n - y_{n-1}} - k_n A(t_{n-1}, y_{n-1}, h,f).
\end{equation*}
\end{definition}
\begin{definition}(Consistent) The difference method is consistent of order $p$ if $\tau = O(h_n^{p+1})$ for positive integer $p$.
\end{definition}
\begin{definition}(Order of Accuracy) The difference method has the order of accuracy $p$ if $\tau = O(h_n^{p+1})$ for positive integer $p$.
\end{definition}
\begin{definition}($0$-stability) A difference method is $0$-stability if there are positive constants $h_0$ and $K$ such that for any mesh function $x_h$ and $z_h$ with $h\leq h_0$,
\begin{equation*}
    |x_n-z_n|\leq K\{|x_0-z_0|+\max_{1\leq j\leq N} |\mathcal{N}_hx_h(t_j)-\mathcal{N}_hz_h(t_j)|\},\quad 1\leq n\leq N.
\end{equation*}
\end{definition}

\begin{theorem}(Dahlquist Equivalence Theorem)
A difference method is convergent if and only if it is consistent and stable.
\end{theorem}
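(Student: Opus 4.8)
This is the classical Dahlquist equivalence theorem, and the plan is to prove both implications in the usual linear $k$-step setting (I treat the constant step $k$; the variable-step statement is analogous under mild restrictions on the step ratios),
\[
\sum_{j=0}^{k}\alpha_j\,y_{n+j}=k\sum_{j=0}^{k}\beta_j\,f(t_{n+j},y_{n+j}),
\]
with characteristic polynomials $\rho(\zeta)=\sum_{j=0}^{k}\alpha_j\zeta^{j}$ and $\sigma(\zeta)=\sum_{j=0}^{k}\beta_j\zeta^{j}$. In this language consistency (order $\ge1$) is equivalent to $\rho(1)=0$ and $\rho'(1)=\sigma(1)$, while the $0$-stability of the definition above is equivalent to the root condition on $\rho$: every zero satisfies $|\zeta|\le1$ and every zero with $|\zeta|=1$ is simple. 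Throughout, ``convergent'' means $\max_{0\le n\le N}|y_n-y(t_n)|\to0$ as $k\to0$ whenever the starting values converge to $y_0$.

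For ``consistent $+$ stable $\Rightarrow$ convergent'' I would insert the exact solution into the scheme and subtract, obtaining the error recursion $\sum_{j}\alpha_j e_{n+j}=r_n$ for $e_n=y_n-y(t_n)$, whose forcing $r_n$ is the local truncation error together with a nonlinear part controlled, via the Lipschitz continuity of $f$, by $kL$ times nearby errors ($L$ the Lipschitz constant). The heart of $0$-stability is that the discrete fundamental solution (Green's function) of the homogeneous operator $u\mapsto\sum_{j}\alpha_j u_{n+j}$ is bounded uniformly in $n$; together with a discrete Gr\"onwall inequality this yields an estimate of exactly the $0$-stability form stated above --- $\max_{0\le n\le N}|e_n|$ bounded by a fixed multiple, independent of $k$, of the initial error plus the accumulated local errors. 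Consistency drives the latter to $0$ with $k$, so if the starting values converge then $\max_n|e_n|\to0$ and the method is convergent.

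For the converse I would test the method on elementary problems. Stability: on $y'\equiv0$, $y(0)=0$ (exact solution $0$), a failure of the root condition gives either a zero $\zeta$ of $\rho$ with $|\zeta|>1$ or a double zero with $|\zeta|=1$; taking starting values $y_\mu=k\zeta^{\mu}$ (respectively $y_\mu=\sqrt{k}\,\mu\,\zeta^{\mu}$), which tend to $0$ as $k\to0$, the recursion produces $y_n=k\zeta^{n}$ (respectively $y_n=\sqrt{k}\,n\,\zeta^{n}$), which is unbounded at any fixed time $t=nk$ as $k\to0$, contradicting convergence. Consistency: applying the method to $y'\equiv0$, $y(0)=1$ and using the uniform convergence $|y_n-1|\le\delta_k\to0$, one passes to the limit in $\sum_{j}\alpha_j y_{n+j}=0$ to get $\rho(1)=0$; then, applying it to $y'\equiv1$, $y(0)=0$ (exact solution $t$) and writing the numerical solution as a particular solution $nk\,\sigma(1)/\rho'(1)$ plus a homogeneous part that the root condition keeps bounded and that the starting data drive to $0$, a comparison with the exact ramp at a fixed time $t=nk$ forces $\sigma(1)/\rho'(1)=1$. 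The two tests together give consistency.

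The one genuinely delicate point is the uniform-in-$n$ bound on the discrete Green's function used in the forward direction: this is exactly where simplicity of the unimodular roots of $\rho$ enters (a double root on $|\zeta|=1$ would produce growth like $n^{s}\zeta^{n}$), and it is what keeps the stability constant independent of $k$. The remaining steps --- the algebraic reformulation of consistency and stability, and the test-equation computations --- are routine.
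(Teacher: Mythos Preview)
The paper does not prove this statement: the Dahlquist Equivalence Theorem appears in Section~2 (Notations and Preliminaries) as a classical background result, stated without proof and without a reference to a proof. So there is no ``paper's own proof'' to compare your attempt against.

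That said, your outline is a correct sketch of the standard argument for linear multistep methods: the forward implication via the uniform bound on the discrete Green's function (which is exactly where the root condition---in particular, simplicity of the unimodular zeros of $\rho$---is used) together with a discrete Gr\"onwall inequality; and the converse via the two test problems $y'\equiv0$ (to force the root condition, using perturbed starting data proportional to $k\zeta^{\mu}$ or $\sqrt{k}\,\mu\,\zeta^{\mu}$) and $y'\equiv1$ (to force $\rho'(1)=\sigma(1)$ once $\rho(1)=0$ is known). One small presentational point: you overload the symbol $k$ for both the step size and the number of steps in the method; since the paper already uses $k$ (and $k_n$) for the step size, it would be cleaner to write the method as an $s$-step scheme $\sum_{j=0}^{s}\alpha_j y_{n+j}=k\sum_{j=0}^{s}\beta_j f(t_{n+j},y_{n+j})$.
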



The following lemma is found in Dahlquist 
\cite{dahlquist1979some} and summarizes $A$-stability for two-step methods with variable timesteps. Let 
\begin{align*}
    \rho(\eta)&=\alpha_2 \eta^2+\alpha_1 \eta+\alpha_0,\\
    \sigma(\eta)&=\beta_2\eta+\beta_1 \eta+\beta_0.
\end{align*}
be the characteristic polynomials of a 2-step method. 
\begin{lemma}(see page 4 Lemma 2) \label{A-stableLemma}
The consistent, A-stable two-step methods can be expressed in terms of three non-negative parameters, $a,b,c$
\begin{align*}
    &2\alpha_2=c+1, &4\beta_2=1+b+(a+c),\\
    &2\alpha_1=-2c, &4\beta_1=2(1-b),\\
    &2\alpha_0=c-1, &4\beta_0=1+b-(a+c).
\end{align*}
Conversely, $c=-\alpha_1$, $b=1-2\beta_1$, $a+c=2(\beta_2-\beta_0)$.
\end{lemma}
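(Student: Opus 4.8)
\emph{Proof proposal.} The plan is to reduce $A$-stability to the nonnegativity of one explicit real polynomial via the Cayley transform and then read off the three nonnegative parameters. First I would dispose of the bookkeeping: a two-step method carries the six coefficients $\alpha_2,\alpha_1,\alpha_0,\beta_2,\beta_1,\beta_0$, constrained by the consistency identities $\rho(1)=0$ and $\rho'(1)=\sigma(1)$ and by one free scaling, so after normalizing $\rho'(1)=1$ (hence $\sigma(1)=1$ by consistency) exactly three parameters remain. A direct computation shows that the formulas in the statement satisfy $\alpha_2+\alpha_1+\alpha_0=0$, $\beta_2+\beta_1+\beta_0=1$, and $2\alpha_2+\alpha_1=1$, so every triple $(a,b,c)$ yields a consistent normalized two-step method, and the ``converse'' relations $c=-\alpha_1$, $b=1-2\beta_1$, $a+c=2(\beta_2-\beta_0)$ invert the assignment; this establishes that $(a,b,c)$ is a genuine coordinate system on consistent normalized methods, with no sign restriction yet imposed.

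The substance is the equivalence of $A$-stability with $a,b,c\ge 0$. I would apply the Cayley transform $\eta=\frac{1+w}{1-w}$, which sends $|\eta|<1$ to $\operatorname{Re}(w)<0$ and $|\eta|=1$ to the imaginary axis, and set $\tilde\rho(w)=(1-w)^2\rho\!\left(\frac{1+w}{1-w}\right)$, $\tilde\sigma(w)=(1-w)^2\sigma\!\left(\frac{1+w}{1-w}\right)$, which are quadratics in $w$. Consistency forces $\tilde\rho(0)=\rho(1)=0$, so $\tilde\rho(w)=w(A_1+A_2w)$ with $A_1=2(\alpha_2-\alpha_0)=2\rho'(1)=2$ and $A_2=\alpha_2-\alpha_1+\alpha_0=-2\alpha_1$, and $\tilde\sigma(w)=B_0+B_1w+B_2w^2$ with $B_0=\sigma(1)=1$, $B_1=2(\beta_2-\beta_0)$, $B_2=\sigma(-1)=\beta_2-\beta_1+\beta_0$. $A$-stability means: for every $z$ with $\operatorname{Re}(z)\le 0$, every root $w$ of $\tilde\rho(w)-z\tilde\sigma(w)=0$ has $\operatorname{Re}(w)\le 0$, simple on the axis, with the usual convention at $w=1$ ($\eta=\infty$). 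Evaluating on the axis one computes
\[
\operatorname{Re}\!\left[\tilde\rho(it)\,\overline{\tilde\sigma(it)}\right]=A_2B_2\,t^4+(A_1B_1-A_2B_0)\,t^2 ,
\]
so the boundary-locus inequality $\operatorname{Re}\frac{\rho}{\sigma}\ge 0$ on $|\eta|=1$ is equivalent to $A_2B_2\ge 0$ and $A_1B_1-A_2B_0\ge 0$; substituting the normalized values and the converse relations turns these into $bc\ge 0$ and $a\ge 0$. Separately, looking at $z=0$ forces the root condition on $\rho$, whose nontrivial root is $\frac{c-1}{c+1}$, hence $c\ge 0$; and letting $z\to\infty$ forces the roots of $\rho-z\sigma$ to remain bounded, i.e.\ $\deg\sigma=\deg\rho$ and the roots of $\sigma$ in $\overline{\mathbb D}$, which with $\sigma(1)=1>0$ forces $\beta_2>0$ and then $\sigma(-1)=b\ge 0$. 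Thus $A$-stability $\Rightarrow a,b,c\ge 0$. Conversely, for $a,b,c\ge 0$ one has $\beta_2=\tfrac14(1+a+b+c)>0$, the root conditions for $\rho$ and $\sigma$ both hold, the displayed quartic is nonnegative, and an argument-principle / continuity-of-roots argument (the number of roots of $\tilde\rho-z\tilde\sigma$ in the open left half-plane is constant on each component of the complement of the boundary locus, and equals $2$ far out in the left half-plane, where these roots approach the roots of $\sigma$) shows that the entire left half-plane lies in the stability region.

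\textbf{Where the difficulty lies.} The delicate point is getting the $A$-stability criterion exactly right: the boundary-locus inequality $\operatorname{Re}\frac{\rho}{\sigma}\ge 0$ on $|\eta|=1$ is necessary but \emph{not} sufficient --- the midpoint (leapfrog) rule satisfies it and is not $A$-stable --- so one must add the root conditions on \emph{both} $\rho$ and $\sigma$ together with the degree condition $\deg\sigma=\deg\rho$, and then run a winding/continuity argument to certify that the open left half-plane, not merely its boundary, sits inside the stability region. It is precisely these extra conditions that force $b$ and $c$ to be individually nonnegative rather than only $bc\ge 0$, and carefully disposing of the borderline configurations $a=0$, $b=0$ or $c=0$ (roots sitting on $i\mathbb R$, or on $|\eta|=1$) is the bulk of the remaining work.
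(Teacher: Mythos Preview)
The paper does not prove this lemma: it is quoted from Dahlquist~\cite{dahlquist1979some} (``see page~4 Lemma~2'') and used as a black box, so there is no in-paper argument to compare against.

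That said, your sketch is essentially Dahlquist's own proof. The Cayley substitution $\eta=(1+w)/(1-w)$ is exactly the device he uses; after it, the transformed polynomials read $\tilde\rho(w)=2w(1+cw)$ and $\tilde\sigma(w)=1+(a+c)w+bw^{2}$, which is why the three parameters $a,b,c$ are natural and why $\operatorname{Re}\bigl[\tilde\rho(it)\overline{\tilde\sigma(it)}\bigr]=2bc\,t^{4}+2a\,t^{2}$ drops out so cleanly. Your remark that the boundary-locus inequality alone is insufficient (leapfrog being the standard counterexample) and that one must additionally impose the root conditions on $\rho$ and $\sigma$ at $z=0$ and $z\to\infty$ to separate $b\ge 0$, $c\ge 0$ from merely $bc\ge 0$ is exactly right, and is the content of Dahlquist's Lemma~1 which he invokes at that point. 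The only place your outline is thin is the sufficiency direction: the ``argument-principle / continuity-of-roots'' step is correct in spirit but would need the observation that $\tilde\sigma$ has no zeros in the open right half $w$-plane (Routh--Hurwitz for $bw^{2}+(a+c)w+1$ with $a,b,c\ge 0$) so that $\tilde\rho/\tilde\sigma$ is analytic there, after which positivity of $\operatorname{Re}(\tilde\rho/\tilde\sigma)$ on $i\mathbb{R}$ and at $\infty$ finishes it. With that filled in, your argument is complete and matches the cited source.
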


A simple check that $a,b,c\geq 0$ will be completed in sub-sections \ref{ConstantTimeStepStability} and \ref{VariableTimeStepStability} to show $A$-stability.
\section{Constant Time step} We consider the initial value problem
 $y'(t)=f(t,y(t)),$  for $t>0$ and $y(0)=y_0$. Denote the $n$-th time stepsize by $k_n$. Let $t_{n+1}=t_n+k_n$ and $y_n$ an approximation to $y(t_n)$. We discretize by theta method followed by a simple 3 point time filter. 
\begin{equation} \label{21}
\begin{split}
     \text{Step 1}: y_{n+1}^* &=y_n+k((1-\theta) f(t_n,y_n)+\theta f(t_{n+1},y_{n+1}^*)) \\
     \text{Step 2}: y_{n+1}&=y_{n+1}^*+\{a y_{n+1}^*+by_n+cy_{n-1}\}
     \end{split}
\end{equation}
 where $a, b, c \in \mathbb{R}$. Notice that for $\theta=0$, we get  explicit Forward Euler method, for $\theta=\frac{1}{2}$, we get Trapezoidal method (implicit) and for $\theta=1$ we get implicit Backward Euler method. We find proper values of $a,\ b, \ c$ for which the method is consistent in Section 3.1.
\subsection{Consistency and Accuracy}
First, we study consistency and accuracy.
\begin{Proposition} \label{p1c}
 Let the time step, $k$, be constant and $\nu\neq 2$. The method \eqref{21} is consistent if and only if $a=-\frac{\nu}{2},b=\nu,c=-\frac{\nu}{2}$ for some $\nu$. Thus, the step 2 of \eqref{21} is 
\begin{equation}\label{t2}
    y_{n+1}=y_{n+1}^*-\frac{\nu}{2}\{ y_{n+1}^*-2y_n+y_{n-1}\} 
\end{equation}
Moreover, when $\theta =  \frac{2+\nu}{4-2\nu}$ or equivalently $\nu = 2\frac{2\theta-1}{2\theta+1}$, \eqref{21} is second order convergent.
\end{Proposition}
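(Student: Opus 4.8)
\emph{Proof strategy.} The plan is to compute the local truncation error $\tau_{n+1}$ of the two-step scheme \eqref{21} by substituting a smooth exact solution, expanding in $k$, and reading off the order conditions. The ``if and only if'' for consistency is the statement that the $k^{0}$- and $k^{1}$-coefficients of $\tau_{n+1}$ vanish, and the second-order claim is that the $k^{2}$-coefficient vanishes as well.

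First I would eliminate the intermediate value. Inserting Step~1 into Step~2 gives the genuine two-step update
\begin{equation*}
 y_{n+1}=(1+a+b)\,y_{n}+c\,y_{n-1}+(1+a)\,k\bigl[(1-\theta)f(t_{n},y_{n})+\theta f(t_{n+1},y_{n+1}^{*})\bigr].
\end{equation*}
Feeding in the exact solution for $y_{n-1},y_{n}$, I would use that Step~1 alone is exactly the $\theta$-method, whose elementary one-step error estimate gives $y_{n+1}^{*}=y(t_{n+1})+O(k^{2})$, hence $f(t_{n+1},y_{n+1}^{*})=y'(t_{n+1})+O(k^{2})$ and $(1+a)k\theta f(t_{n+1},y_{n+1}^{*})=(1+a)k\theta\,y'(t_{n+1})+O(k^{3})$. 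So, modulo $O(k^{3})$, the auxiliary variable may be replaced by $y'(t_{n+1})$, and
\begin{equation*}
 \tau_{n+1}=y(t_{n+1})-(1+a+b)\,y(t_{n})-c\,y(t_{n-1})-(1+a)k\bigl[(1-\theta)y'(t_{n})+\theta y'(t_{n+1})\bigr]+O(k^{3}).
\end{equation*}

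Next I would Taylor-expand $y(t_{n+1})$, $y(t_{n-1})$ and $y'(t_{n+1})$ about $t_{n}$ and collect powers of $k$. The $k^{0}$, $k^{1}$, $k^{2}$ coefficients come out (times the appropriate derivative of $y$ at $t_{n}$) as $-(a+b+c)$, $\;c-a$, and $\tfrac12-\tfrac{c}{2}-(1+a)\theta$. Consistency, i.e. $\tau_{n+1}=O(k^{2})$, holds iff the first two vanish, i.e. $a+b+c=0$ and $c=a$, i.e. $b=-2a$ and $c=a$; writing $\nu:=-2a$ yields $a=-\tfrac{\nu}{2}$, $b=\nu$, $c=-\tfrac{\nu}{2}$ and the filter form \eqref{t2}. (The hypothesis $\nu\neq2$, i.e. $1+a\neq0$, keeps Step~2 genuinely coupled to Step~1 and makes $\theta=\tfrac{2+\nu}{4-2\nu}$ well defined.) For second-order accuracy I would further set the $k^{2}$-coefficient to zero: with $a=c=-\tfrac{\nu}{2}$ the equation $\tfrac12-\tfrac{c}{2}-(1+a)\theta=0$ reads $\tfrac12+\tfrac{\nu}{4}-(1-\tfrac{\nu}{2})\theta=0$, i.e. $(4-2\nu)\theta=2+\nu$, i.e. $\theta=\tfrac{2+\nu}{4-2\nu}$, equivalently $\nu=2\tfrac{2\theta-1}{2\theta+1}$, and then $\tau_{n+1}=O(k^{3})$. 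Since this value of $\nu$ lies in $[-2,2)$ for every $\theta\ge0$, the scheme is $0$-stable by the stability result (Proposition~3.2), so by the Dahlquist Equivalence Theorem---together with the standard fact that a $0$-stable method converges with the order of its consistency---the method is second-order convergent.

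The one place to be careful is the intermediate variable $y_{n+1}^{*}$: one must verify through the $\theta$-method's own local error estimate (an implicit-function/Lipschitz argument on Step~1) that its $O(k^{2})$ deviation from $y(t_{n+1})$ enters $\tau_{n+1}$ only at order $O(k^{3})$, so that it cannot disturb the $k^{0}$, $k^{1}$, $k^{2}$ coefficients that govern consistency and second-order accuracy. The remaining steps are routine Taylor bookkeeping.
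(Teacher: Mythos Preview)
Your argument is correct, but it proceeds along a different line than the paper. The paper eliminates the intermediate variable in the \emph{other} direction: it solves Step~2 for $y_{n+1}^{*}=\frac{1}{1+a}(y_{n+1}-by_{n}-cy_{n-1})$, substitutes this into Step~1, and thereby obtains an explicit equivalent two-step scheme in the unknowns $y_{n+1},y_{n},y_{n-1}$ (equation~\eqref{ConstantTimeStepEquivalentMultistepMethod}). It then computes the LTE of that scheme by Taylor expanding $f(t_{n+1},y^{*})$ in its second argument about $y(t_{n+1})$ and expanding $y(t_{n}),y(t_{n-1})$ about $t_{n+1}$, arriving at the equivalent conditions $a+b+c=0$, $b+2c=0$ (which reduce to your $a+b+c=0$, $c=a$) and then $\frac{3\nu-2}{2-\nu}+2(1-\theta)=0$ for second order. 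By contrast, you substitute Step~1 into Step~2, invoke the $\theta$-method's own local error to replace $f(t_{n+1},y_{n+1}^{*})$ by $y'(t_{n+1})+O(k^{2})$, and expand about $t_{n}$. Your route is shorter and conceptually clean---the $\theta$-method is treated as a black box with known $O(k^{2})$ one-step error---while the paper's route has the advantage of producing the equivalent linear multistep method explicitly, which is exactly the object analyzed for $0$- and $A$-stability in the following subsection. Both expansions yield the same order conditions, and your justification that the $O(k^{2})$ discrepancy in $y_{n+1}^{*}$ only perturbs $\tau_{n+1}$ at $O(k^{3})$ is the right observation to make your shortcut rigorous.
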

\begin{proof}
 Rewriting the $Step\ 2$ we get $y_{n+1}^*=\frac{1}{1+a}(y_{n+1}-by_n-cy_{n-1})$. Putting this in $Step\ 1$, we get the following
\begin{align}\label{consistency}
   &\frac{1}{1+a}y_{n+1}-\frac{1+a+b}{1+a}y_n-\frac{c}{1+a}y_{n-1}
   =k(1-\theta)f(t_n,y_n)\notag\\&+k\theta f(t_{n+1},\frac{1}{1+a}y_{n+1}-\frac{b}{1+a}y_n-\frac{c}{1+a}y_{n-1})
\end{align}
Let $$y^*=\frac{1}{1+a}y(t_{n+1})-\frac{b}{1+a}y(t_n)-\frac{c}{1+a}y(t_{n-1})$$\\
Hence using Taylor Expansion, we get  
\begin{align*}
    &f(t_{n+1},y^*)=f(t_{n+1},y(t_{n+1}))+\frac{\partial f}{\partial y}(t_{n+1},y(t_{n+1}))(y^*-y(t_{n+1}))\\
     &+\frac{1}{2}\frac{\partial^2 f}{\partial y^2}(t_{n+1},y(t_{n+1}))(y^*-y(t_{n+1}))^2
     +\frac{1}{3!}\frac{\partial^3 f}{\partial y^3}(t_{n+1},y(t_{n+1}))(y^*-y(t_{n+1}))^3\\
     &+\cdots+\frac{1}{n!}\frac{\partial^n f}{\partial y^n}(t_{n+1},y(t_{n+1}))(y^*-y(t_{n+1}))^n+\cdots. 
\end{align*}
Notice, 
\begin{align*}
    y^*-y(t_{n+1})&=\frac{1}{1+a}y(t_{n+1})-\frac{b}{1+a}y(t_n)-\frac{c}{1+a}y(t_{n-1})-y(t_{n+1}),\\
    &=-\frac{a}{1+a}y(t_{n+1})-\frac{b}{1+a}y(t_n)-\frac{c}{1+a}y(t_{n-1}).
\end{align*}
Again by Taylor expansion, we get
$$y(t_n)=y(t_{n+1})-ky'(t_{n+1})+\frac{k^2}{2}y''(t_{n+1})+\frac{(-k)^3}{3!}y^{(3)}(t_{n+1})+\mathcal{O}(k^4).$$
$$y(t_{n-1})=y(t_{n+1})-2ky'(t_{n+1})+\frac{(2k)^2}{2}y''(t_{n+1})+\frac{(-2k)^3}{3!}y^{(3)}(t_{n+1})+\mathcal{O}(k^4).$$
Hence we get, 
\begin{align*}
    &y^*-y(t_{n+1})=-\frac{a+b+c}{1+a}y(t_{n+1})+\frac{b+2c}{1+a}ky'(t_{n+1})-\frac{b+4c}{2(1+a)}k^2y''(t_{n+1})\\
     &+\frac{1}{3!}\frac{b+8c}{1+a}k^3 y^{(3)}(t_{n+1})+\cdots+(-1)^{n+1}\frac{1}{n!}\frac{b+2^nc}{1+a}k^ny^{(n)}(t_{n+1})+\mathcal{O}(k^{n+1}).
\end{align*}
Insert the exact solution in \eqref{consistency}, we can get the local truncation error, 
\begin{align*}
&LTE=\frac{1}{1+a}y(t_{n+1})-\frac{1+a+b}{1+a}y(t_{n})-\frac{c}{1+a}y(t_{n-1})-k(1-\theta)y'(t_n)-k\theta f(t_{n+1},y^*) \\
&=\Big(\frac{1}{1+a}-\frac{1+a+b}{1+a}-\frac{c}{1+a}\Big)y(t_{n+1})+\Big(\frac{1+a+b}{1+a}+\frac{2c}{1+a}-1\Big)ky'(t_{n+1})+\mathcal{O}(k^2).
\end{align*}
 To prove the method is  consistent, we need to have
 \begin{equation*}
     \Big(\frac{1}{1+a}-\frac{1+a+b}{1+a}-\frac{c}{1+a}\Big)=0, \ \  \Big(\frac{1+a+b}{1+a}+\frac{2c}{1+a}-1\Big)=0
 \end{equation*}
 which implies two conditions $a+b+c=0, b+2c=0$ and $a\neq -1$. If we take $b=\nu$ as free variable, we get $a=-\frac{\nu}{2}, c=-\frac{\nu}{2}$.\\
 Thus we get the linear multistep method is consistency if and only if Step 2 reads
 \begin{equation*}
     y_{n+1}=y^*_{n+1}-\frac{\nu}2 \left( y^*_{n+1}-2y_n+y_{n-1}\right)
 \end{equation*}
 for some $\nu\in\mathbb{R}$. We need to investigate for higher order of convergence. Setting $b=\nu$, $a=-\frac{\nu}{2}, c=-\frac{\nu}{2}$ in \eqref{consistency} gives the equivalent method as
 \begin{equation}
 \begin{aligned}\label{ConstantTimeStepEquivalentMultistepMethod}
   &\frac{2}{2-\nu}y_{n+1}-\frac{2+\nu}{2-\nu}y_n-\frac{-\nu}{2-\nu}y_{n-1}=k(1-\theta)f(t_n,y_n)\\&+k\theta f(t_{n+1},\frac{2}{2-\nu}y_{n+1}-\frac{2\nu}{2-\nu}y_n-\frac{-\nu}{2-\nu}y_{n-1}).
   \end{aligned}
  \end{equation}
and 
$$y^*=\frac{2}{2-\nu}y(t_{n+1})-\frac{2\nu}{2-\nu}y(t_n)+\frac{\nu}{2-\nu} y(t_{n-1}).$$\\
Next,
 \begin{align*}
    y^*-y(t_{n+1})&=\frac{\nu}{2-\nu}k^2y''(t_{n+1})-\frac{\nu}{2-\nu}k^3 y^{(3)}(t_{n+1})+\cdots\\&+(-1)^{n+1}\frac{1}{n!}\frac{(2-2^{n})\nu}{2-\nu}k^ny^{(n)}(t_{n+1})+\mathcal{O}(k^{n+1}).
\end{align*}
and
 \begin{align*}
    f(t_{n+1},y^*)&=y'(t_{n+1})+y''(t_{n+1})(y^*-y(t_{n+1}))+\frac{1}{2}y'''(t_{n+1})(y^*-y(t_{n+1}))^2\\
     &+\frac{1}{3!}y^{(4)}(t_{n+1})(y^*-y(t_{n+1}))^3+\cdots+\frac{1}{n!}y^{(n+1)}(t_{n+1}))(y^*-y(t_{n+1}))^n+\cdots.
\end{align*}
and 
\begin{align*}
    f(t_n,y_n) &= y'(t_n) = y'(t_{n+1}) - k y''(t_{n+1}) + \mathcal{O}(k^2).
\end{align*}
Therefore
\begin{align*}
LTE&=\frac{2}{2-\nu}y(t_{n+1})-\frac{2+\nu}{2-\nu}y(t_{n})-\frac{-\nu}{2-\nu}y(t_{n-1})-k(1-\theta)y'(t_n)-k\theta f(t_{n+1},y^*), \\
& = -\frac{2+\nu}{2-\nu}\frac{k^2}{2}y''(t_{n+1})+\frac{\nu}{2-\nu}2k^2y''(t_{n+1})-k(1-\theta)(-k)y''(t_{n+1})+\mathcal{O}(k^3),\\
& = \frac{1}{2}(-\frac{2+\nu}{2-\nu}+\frac{4\nu}{2-\nu}+2(1-\theta))k^2y''(t_{n+1)}+\mathcal{O}(k^3),\\
& = \frac{1}{2}(\frac{3\nu-2}{2-\nu}+2(1-\theta))k^2y''(t_{n+1})+\mathcal{O}(k^3).
\end{align*}
 To get convergent of order 2, we need to have $\Big(\frac{3\nu-2}{2-\nu}+2(1-\theta)\Big)=0$ which implies 
 $\theta =  \frac{2+\nu}{4-2\nu}$ or equivalently $\nu = \frac{2(2\theta-1)}{2\theta+1}$.
\end{proof}

From the above proof, we see that for any choice of $\theta\in[0,1]$ and $\nu \neq 2$, we gain a consistent two-parameter family of methods \eqref{ConstantTimeStepEquivalentMultistepMethod}. Under the condition $2\theta \nu +\nu -4\theta +2=0$, the method becomes second order, but as it turns out their no choice of $\theta$ and $\nu$, where the method becomes third order, which can be seen in the results of Proposition \eqref{VariableTimeStepConsistency}.
\subsection{Constant time step: 0-stable and $A$-stability} \label{ConstantTimeStepStability}
The equivalent Linear multistep method \eqref{ConstantTimeStepEquivalentMultistepMethod} corresponds to a linear multistep method
\begin{equation}\label{24}
   \alpha_2 y_{n+1} +\alpha_1 y_{n}+\alpha_0 y_{n-1}=k(1-\theta) f(t_n,y_n)+k\theta f(t_{n+1},\beta_2 y_{n+1}+\beta_1 y_{n}+\beta_0 y_{n-1}) 
\end{equation}
where the coefficients are
\begin{align*}
    \alpha_2 &=\frac{1}{1-\frac{\nu}{2}},& \alpha_1 &=-\frac{1+\frac{\nu}{2}}{1-\frac{\nu}{2}},& \alpha_0 &=\frac{\frac{\nu}{2}}{1-\frac{\nu}{2}},\\
    \beta_2 &=\frac{1}{1-\frac{\nu}{2}}, &\beta_1 &=-\frac{\nu}{1-\frac{\nu}{2}},& \beta_0 &=\frac{\frac{\nu}{2}}{1-\frac{\nu}{2}}.
\end{align*}
and $ \nu \neq 2$.    
\begin{Proposition}
The method \eqref{21} is 0-stable for $-2 \leq \nu < 2$ and A-stable for $\theta\geq\frac{1}{2}$ and $ 2-4\theta\leq (2\theta+1)\nu\leq 4\theta-2$.
\end{Proposition}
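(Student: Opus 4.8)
The plan is to analyze the equivalent linear multistep method \eqref{ConstantTimeStepEquivalentMultistepMethod} (equivalently \eqref{24}) by applying it to the scalar test equation $y' = \lambda y$ and examining the resulting characteristic polynomial. For $0$-stability, I would apply the Root Condition (Dahlquist): the method is $0$-stable if and only if all roots of $\rho(\eta) = \alpha_2 \eta^2 + \alpha_1 \eta + \alpha_0$ lie in the closed unit disk with any root on the unit circle being simple. Plugging in the coefficients from the list after \eqref{24}, one factor is $\eta = 1$ (consistency forces $\rho(1) = 0$), and the other root is $\eta = \alpha_0/\alpha_2 = \nu/2$ (up to sign bookkeeping — more precisely $\rho(\eta) = \alpha_2(\eta-1)(\eta - \alpha_0/\alpha_2)$, so the second root is $\tfrac{\nu/2}{1}$ after clearing the common $1/(1-\nu/2)$ denominator). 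Requiring $|\nu/2| < 1$, i.e. $-2 < \nu < 2$, gives a simple root strictly inside; checking the boundary $\nu = -2$ separately shows the two roots are $1$ and $-1$, both simple, so $0$-stability holds on $-2 \le \nu < 2$, while $\nu = 2$ is excluded (degenerate) and $\nu < -2$ fails.

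For $A$-stability, the strategy is to invoke Lemma \ref{A-stableLemma}: express the method's characteristic polynomials $\rho,\sigma$ in the $(a,b,c)$ parametrization and show $a,b,c \ge 0$ under the stated hypotheses. I must be careful that the $\sigma$ here is the one associated with the $\theta$-method stencil applied to $\lambda y$: applying \eqref{24} to $y' = \lambda y$ gives $\alpha_2 y_{n+1} + \alpha_1 y_n + \alpha_0 y_{n-1} = k\lambda[(1-\theta) y_n + \theta(\beta_2 y_{n+1} + \beta_1 y_n + \beta_0 y_{n-1})]$, so the effective numerator polynomial has coefficients $(\theta\beta_2,\ (1-\theta) + \theta\beta_1,\ \theta\beta_0)$. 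Using the converse formulas from Lemma \ref{A-stableLemma}, $c = -\alpha_1$, $b = 1 - 2\sigma_1$, $a + c = 2(\sigma_2 - \sigma_0)$, I would substitute the explicit coefficients (with $\sigma_j$ the coefficients just described) and simplify. After clearing the common $1/(1-\nu/2)$ factor and normalizing so that $\alpha_2 + \alpha_1 + \alpha_0$ and the $\sigma$-normalization match the Lemma's convention, the three quantities should come out as rational expressions in $\theta$ and $\nu$ whose nonnegativity is exactly $\theta \ge \tfrac12$ together with the two-sided bound $2 - 4\theta \le (2\theta+1)\nu \le 4\theta - 2$.

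Concretely I expect: $c = -\alpha_1$ reduces (after normalization) to something proportional to $\tfrac{1 + \nu/2}{1 - \nu/2}$ or similar, forcing one inequality; $b = 1 - 2\sigma_1$ will involve $\theta$ and force $\theta \ge \tfrac12$ (or combine with $\nu$); and $a + c = 2(\sigma_2 - \sigma_0)$ will pin down the remaining bound. The two displayed inequalities $2 - 4\theta \le (2\theta+1)\nu$ and $(2\theta+1)\nu \le 4\theta - 2$ are symmetric about $\nu$-values tied to $\nu = \pm 2\tfrac{2\theta-1}{2\theta+1}$, which is consistent with the second-order choice sitting in the middle of the $A$-stable window — a good sanity check. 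One subtlety: Lemma \ref{A-stableLemma} as quoted is for linear \emph{multistep} methods of the standard form, whereas \eqref{24} has the $f$-argument shifted inside $\theta$; I would note at the outset that applying \eqref{24} to the test equation collapses it to a genuine two-step recurrence with polynomials $\rho$ and $\tilde\sigma(\eta) = \theta\beta_2\eta^2 + ((1-\theta)+\theta\beta_1)\eta + \theta\beta_0$, so the Lemma applies verbatim to $(\rho, \tilde\sigma)$.

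The main obstacle will be the bookkeeping in the second part: getting the normalization convention of Lemma \ref{A-stableLemma} to line up with the coefficients of \eqref{24} (the Lemma implicitly assumes a particular scaling, e.g. $\alpha_2 - \alpha_0 = 1$ or $\sigma(1) = 1$), and then verifying that the three nonnegativity conditions on $a,b,c$ collapse \emph{exactly} to the two stated inequalities plus $\theta \ge \tfrac12$ — in particular checking that no fourth independent constraint sneaks in and that the sign of the common factor $1 - \nu/2$ (positive on the relevant range $\nu < 2$) is handled correctly when clearing denominators. The $0$-stability part, by contrast, is a short explicit root computation and should be routine.
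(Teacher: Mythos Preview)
Your proposal is correct and follows essentially the same route as the paper: factor $\rho$ to find roots $1$ and $\nu/2$ for $0$-stability, then apply Lemma~\ref{A-stableLemma} to the pair $(\rho,\tilde\sigma)$ with $\tilde\sigma(\eta)=\theta\beta_2\eta^2+((1-\theta)+\theta\beta_1)\eta+\theta\beta_0$ and check the three sign conditions $c=-\alpha_1\ge 0$, $b=1-2\tilde\sigma_1\ge 0$, $a=2(\tilde\sigma_2-\tilde\sigma_0)+\alpha_1\ge 0$. Your normalization worry is unfounded here because the coefficients after \eqref{24} already satisfy $\alpha_2-\alpha_0=1$, so the converse formulas in Lemma~\ref{A-stableLemma} apply directly; the three conditions then reduce exactly to $-2\le\nu<2$, $(2\theta+1)\nu\ge 2-4\theta$, and $(2\theta+1)\nu\le 4\theta-2$, with the latter two forcing $\theta\ge\tfrac12$.
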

\begin{proof}
    Consider the test function $y'=\lambda y$.
    Recall Equation $\eqref{24}$, we can get 
\begin{equation}\label{twostep}
   \alpha_2 y_{n+1}+\alpha_1 y_n+\alpha_0 y_{n-1}=k(1-\theta)\lambda y_n+k\theta \lambda(\beta_2 y_{n+1}+\beta_1 y_n+\beta_0 y_{n-1}).
\end{equation}
We can get the characteristic polynomials
\begin{align}
    \rho(\eta)&=\alpha_2 \eta^2+\alpha_1 \eta+\alpha_0, \label{rho}\\
    \sigma(\eta)&=(1-\theta)\eta+\theta(\beta_2 \eta^2+\beta_1 \eta+\beta_0) \notag \\
            &=\theta\beta_2\eta^2+(1-\theta+\theta\beta_1)\eta+\theta\beta_0. \label{sigma}
\end{align}
  The linear multistep method is $0$-stable if and only if all roots $z_i$ of the associated polynomial 
  $\rho(\eta)$, satisfy $|z_i|\leq 1$. It gives two roots $z_1=1,z_2=\frac{\nu}{2}$, hence for $0$-stability, we require $-1\leq \frac{ \nu}{2} <1$ which implies $-2\leq \nu <2$.\\
The linear multistep method is absolutely stable (A-stable) if for $Re(z) \leq 0$ the roots of
  \begin{equation*}
      \rho(\eta) - z \sigma(\eta)=0
  \end{equation*}
  satisfy $|\eta|\leq 1$.
We apply the Lemma \eqref{A-stableLemma}. The two step method \eqref{twostep} is A-stable if
  \begin{align}
      -\alpha_1&=\frac{1+\frac{\nu}{2}}{1-\frac{\nu}{2}}\geq 0, \label{1stCondition}\\
     1-2(1-\theta+\theta\beta_1)&= 1-2(1-\theta-\theta\frac{\nu}{1-\frac{\nu}{2}})\geq 0,\label{2ndCondition} \\
     2(\theta\beta_2-\theta\beta_0)+\alpha_1&=2(\theta\frac{1}{1-\frac{\nu}{2}}-\theta\frac{\frac{\nu}{2}}{1-\frac{\nu}{2}})-\frac{1+\frac{\nu}{2}}{1-\frac{\nu}{2}}\geq 0. \label{3rdCondition}
  \end{align}
  The first condition, \eqref{1stCondition}, holds if and only if $-2\leq \nu<2$; When $-2\leq \nu<2$, we can get the following:
  The second condition, \eqref{2ndCondition}, holds if and only if 
  $(1+2\theta)\nu\geq 2-4\theta$;
  The third condition, \eqref{3rdCondition}, holds if and only if 
  $(2\theta+1)\nu\leq 4\theta-2$.\\
  Hence we get when $\theta\geq\frac{1}{2}$ and $ 2-4\theta\leq (2\theta+1)\nu\leq 4\theta-2$, the two-step method \eqref{twostep} is A-stable. 
\end{proof}
\subsection{Constant time step: $A_0$ stability}
The following calculation is done to find more $A$-stability properties of the method.
Considering the roots of $\rho(\zeta)-k\lambda \sigma(\zeta)=0$ and setting $\zeta = e^{i\phi}$, we calculate $\frac{\rho(\zeta)}{\sigma(\zeta)}$.  
\begin{Proposition}
For $\zeta = e^{i\phi}$ the characteristic polynomials \eqref{rho} and \eqref{sigma} have a ratio of 
\begin{equation}
    \frac{\rho(\zeta)}{\sigma(\zeta)} = \frac{ (4(2\theta-1)+\nu^2(2\theta+1) - 8\nu\theta cos(\phi))(1-cos(\phi)) + (2-\nu)^2sin(\phi)i}{[2\theta cos(2\phi) + A cos(\phi) +\nu\theta]^2+[2\theta sin(2\phi)+A sin(\phi)]^2}. \label{CharacteristicRatio}
\end{equation}
\end{Proposition}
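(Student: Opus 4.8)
The plan is to clear the common denominator, rationalize the ratio on the unit circle, and then separate real and imaginary parts using $\zeta^m+\zeta^{-m}=2\cos(m\phi)$ and $\zeta^m-\zeta^{-m}=2i\sin(m\phi)$. First I would scale away the factor $1-\tfrac{\nu}{2}$ common to all the $\alpha_i$ and $\beta_i$: set $\widetilde\rho(\zeta):=(2-\nu)\rho(\zeta)=2\zeta^2-(2+\nu)\zeta+\nu$ and $\widetilde\sigma(\zeta):=(2-\nu)\sigma(\zeta)=2\theta\zeta^2+A\zeta+\theta\nu$, where $A:=2(1-\theta)-(1+\theta)\nu=(2-\nu)(1-\theta+\theta\beta_1)$ is exactly the quantity $A$ appearing in \eqref{CharacteristicRatio}; both identities follow directly from the displayed formulas for $\alpha_i,\beta_i$ together with $1-\tfrac{\nu}{2}=\tfrac{2-\nu}{2}$. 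Since $2-\nu\neq 0$ we have $\rho(\zeta)/\sigma(\zeta)=\widetilde\rho(\zeta)/\widetilde\sigma(\zeta)$, and on $\zeta=e^{i\phi}$ the relation $\bar\zeta=\zeta^{-1}$ gives $\overline{\widetilde\sigma(\zeta)}=2\theta\zeta^{-2}+A\zeta^{-1}+\theta\nu$, so that
\[ \frac{\rho(\zeta)}{\sigma(\zeta)}=\frac{\widetilde\rho(\zeta)\,\overline{\widetilde\sigma(\zeta)}}{\lvert\widetilde\sigma(\zeta)\rvert^{2}}. \]

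The denominator is then immediate: $\lvert\widetilde\sigma(e^{i\phi})\rvert^2=\lvert 2\theta e^{2i\phi}+Ae^{i\phi}+\theta\nu\rvert^2$, and writing out the real and imaginary parts of $2\theta e^{2i\phi}+Ae^{i\phi}+\theta\nu$ reproduces $[2\theta\cos(2\phi)+A\cos(\phi)+\nu\theta]^2+[2\theta\sin(2\phi)+A\sin(\phi)]^2$, which is precisely the denominator of \eqref{CharacteristicRatio}.

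For the numerator I would multiply out $\widetilde\rho(\zeta)\,\overline{\widetilde\sigma(\zeta)}$ as a Laurent polynomial supported on the powers $\zeta^{-2},\zeta^{-1},\zeta^{0},\zeta^{1},\zeta^{2}$. The coefficients of $\zeta^{2}$ and $\zeta^{-2}$ are both equal to $2\theta\nu$, so those two terms contribute $4\theta\nu\cos(2\phi)$ to the real part and cancel in the imaginary part; the $\zeta^{\pm 1}$ terms, with coefficients $p$ (of $\zeta$) and $q$ (of $\zeta^{-1}$), contribute $(p+q)\cos(\phi)$ to the real part and $(p-q)\sin(\phi)$ to the imaginary part; and the $\zeta^{0}$ term is a real constant. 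Eliminating $\cos(2\phi)$ via $\cos^2(\phi)=\tfrac12(1+\cos(2\phi))$ turns the real part into a quadratic in $\cos(\phi)$; substituting $A=2(1-\theta)-(1+\theta)\nu$ and simplifying, I would verify it equals $(4(2\theta-1)+\nu^2(2\theta+1)-8\nu\theta\cos(\phi))(1-\cos(\phi))$. The same substitution makes $p-q$ collapse, factoring as $(2-\nu)\big(A+\theta(2+\nu)\big)=(2-\nu)^2$, so the imaginary part is $(2-\nu)^2\sin(\phi)$, as claimed.

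The main obstacle is nothing more than keeping the two-parameter algebra organized so that the real part collapses to the stated factored form. The most efficient route is to reduce the real-part claim, after the double-angle substitution, to matching the constant term and the coefficient of $\cos(\phi)$ separately — two scalar polynomial identities in $\theta$ and $\nu$ — and to carry $p-q$ through as the single factored expression above rather than in expanded form. There is no analytic subtlety: once the scaling and the conjugation trick are in place, the whole argument is a finite, if slightly tedious, computation.
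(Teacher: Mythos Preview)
Your proposal is correct and follows essentially the same approach as the paper: both scale away the common factor $2-\nu$, rationalize via the conjugate of the denominator, and then simplify real and imaginary parts separately. Your Laurent-polynomial organization (pairing $\zeta^{m}$ with $\zeta^{-m}$ and factoring $p-q=(2-\nu)(A+\theta(2+\nu))=(2-\nu)^2$ before expanding) is a cleaner bookkeeping device than the paper's term-by-term trigonometric expansion, but the underlying computation is the same.
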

\begin{proof}
First multiply by numerator and denominator by $1-\nu/2$ to obtain
\begin{equation*}
    \frac{\rho(\zeta)}{\sigma(\zeta)} = \frac{\zeta^2-(1+\nu/2)\zeta+\nu/2}{\theta \zeta^2+[(1-\theta)(1-\nu/2)-\theta \nu]\zeta + \theta *\nu/2}.
\end{equation*}
To clear fractions multiple by 2,
\begin{equation*}
    \frac{\rho(\zeta)}{\sigma(\zeta)}=\frac{2\zeta^2-(2+\nu)\zeta + \nu}{2\theta \zeta^2 +[2-\nu-\theta(2+\nu)]\zeta +\theta \nu }.
\end{equation*}
Set $A=2-\nu-\theta(2+\nu)$. Next, substitute $\zeta = e^{i\phi}$.
\begin{equation*}
    \frac{\rho(\zeta)}{\sigma(\zeta)} = \frac{2[cos(2\phi)+i sin(2\phi)]-(2+\nu)[cos(\phi)+i sin(\phi)]+\nu}{2\theta[cos(2\phi)+i sin(2\phi)]+A[cos(\phi)+i sin(\phi]+\nu \theta}.
\end{equation*}
Grouping the real terms and the imaginary terms find
\begin{equation*}
    \frac{\rho(\zeta)}{\sigma(\zeta)} = \frac{[2cos(2\phi)-(2+\nu)cos(\phi)+\nu]+i[2 sin(2\phi)-(2+\nu)sin(\phi)]}{[2\theta cos(2\phi) + A cos(\phi) +\nu\theta]+i[2\theta sin(2\phi)+A sin(\phi)]}.
\end{equation*}
Next, rational the denominator by multiplying by the its conjugate. Let $D=[2\theta cos(2\phi) + A cos(\phi) +\nu\theta]^2+[2\theta sin(2\phi)+A sin(\phi)]^2$. Note that $D>0$.
\begin{align*}
    &\frac{\rho(\zeta)}{\sigma(\zeta)} =\frac{1}{D}
     \bigg[(2cos(2\phi)-(2+\nu)cos(\phi)+\nu)(2\theta cos(2\phi) + A cos(\phi) +\nu\theta)+(2 sin(2\phi)-(2+\nu)sin(\phi))\\&(2\theta sin(2\phi)+A sin(\phi)) +i\bigg((2 sin(2\phi)-(2+\nu)sin(\phi))(2\theta cos(2\phi) + A cos(\phi) +\nu\theta)\\&-(2cos(2\phi)-(2+\nu)cos(\phi)+\nu)(2\theta sin(2\phi)+A sin(\phi))\bigg)\bigg].
\end{align*}
 With a surprising amount of cancellation, the imaginary part simplifies as follows
\begin{gather*}
    Im(\frac{\rho(\zeta)}{\sigma(\zeta)}) = \frac{1}D\bigg[[2 sin(2\phi)-(2+\nu)sin(\phi)][2\theta cos(2\phi) + A cos(\phi) +\nu\theta]\\
    -[2cos(2\phi)-(2+\nu)cos(\phi)+\nu][2\theta sin(2\phi)+A sin(\phi)]\bigg] .
\end{gather*}
First, make use of the trig identities $sin(2\phi)=2sin(\phi)cos(\phi)$ and $cos(2\phi)=2cos^2(\phi)-1$ to find
\begin{gather*}
    Im(\frac{\rho(\zeta)}{\sigma(\zeta)}) = \frac{1}D\bigg[[4 cos(\phi) sin(\phi)-(2+\nu)sin(\phi)][4\theta cos^2(\phi) + A cos(\phi) +\theta(\nu-2)]\\
    -[4cos^2(\phi)-(2+\nu)cos(\phi)+\nu-2][4\theta sin(\phi)cos(\phi)+A sin(\phi)]\bigg] .
\end{gather*}
Next, factor out a $sin(\phi)$
\begin{gather*}
    Im(\frac{\rho(\zeta)}{\sigma(\zeta)}) = \frac{sin(\phi)}D\bigg[[4 cos(\phi)-(2+\nu)][4\theta cos^2(\phi) + A cos(\phi) +\theta(\nu-2)]\\
    -[4cos^2(\phi)-(2+\nu)cos(\phi)+\nu-2][4\theta cos(\phi)+A ]\bigg] .
\end{gather*}
Multiply out the terms
\begin{align*}
    &Im(\frac{\rho(\zeta)}{\sigma(\zeta)}) = \frac{sin(\phi)}D\bigg[
    [16\theta cos^3(\phi) + 4A cos^2(\phi) +4\theta (\nu-2) cos(\phi)-4\theta (2+\nu) cos^2(\phi) \\&-(2+\nu)A cos(\phi) -(2+\nu)\theta (\nu-2)]
    -[16\theta cos^3(\phi) -4\theta (2+\nu) cos^2(\phi)\\& +4\theta(\nu-2) cos(\phi) +4A cos^2(\phi) -(2+\nu)A cos(\phi) +(\nu-2)A]\bigg] .
\end{align*}
Group like terms to find
\begin{gather*}
    Im(\frac{\rho(\zeta)}{\sigma(\zeta)}) = \frac{sin(\phi)}D\bigg[(16\theta-16\theta)cos^3(\phi)+(4A-4\theta(2+\nu)+4\theta(2+\nu)-4A)cos^2(\phi) \\
    +(4\theta(\nu-2)-(2+\nu)A-4\theta(\nu-2)+(2+\nu)A)cos(\phi) + (-\theta(\nu-2)(\nu+2)-(\nu-2)A) \bigg].
\end{gather*}
Simplifying 
\begin{gather*}
    Im(\frac{\rho(\zeta)}{\sigma(\zeta)}) = \frac{sin(\phi)}D[-\theta(\nu-2)(\nu+2)-(\nu-2)A].
\end{gather*}
Factor out $(\nu-2)$ and recall the definition of $A=2-\nu-2\theta-\theta\nu$ to find
\begin{gather*}
    Im(\frac{\rho(\zeta)}{\sigma(\zeta)}) = \frac{(\nu-2)sin(\phi)}D[-\theta(\nu+2)-(2-\nu-2\theta-\theta\nu)].
\end{gather*}
Simplify the remaining terms yields
\begin{gather*}
    Im(\frac{\rho(\zeta)}{\sigma(\zeta)}) = \frac{(\nu-2)sin(\phi)}D[-2+\nu].
\end{gather*}
Thus
\begin{gather*}
    Im(\frac{\rho(\zeta)}{\sigma(\zeta)}) = \frac{(2-\nu)^2sin(\phi)}D.
\end{gather*}
We show the details for the real part.
\begin{align*}
Re( \frac{\rho(\zeta)}{\sigma(\zeta)} )=& \frac{1}{D}\bigg[[2cos(2\phi)-(2+\nu)cos(\phi)+\nu][2\theta cos(2\phi) + A cos(\phi) +\nu\theta]\\&+[2 sin(2\phi)-(2+\nu)sin(\phi)][2\theta sin(2\phi)+A sin(\phi)]\bigg],
\\=&\frac{1}D[4\theta (cos^2(2\phi)+sin^2(2\phi)) - (2+\nu)A(cos^2(\phi)+sin^2(\phi)) \\&+ (2A-2\theta (2+\nu))(cos(2\phi)cos(\phi)+sin(2\phi)sin(\phi))\\& + 4\nu\theta cos(2\phi) + (\nu A-\nu\theta (2+\nu)) cos(\phi) +\theta \nu^2].
\end{align*}
Recall the Pythagorean identity and the double angle identities $cos(2\phi)cos(\phi)+sin(2\phi)sin(\phi)=cos(\phi)$ and $cos(2\phi)=2cos^2(\phi)-1$. Applying them yields
\begin{gather*}
   Re( \frac{\rho(\zeta)}{\sigma(\zeta)} )= \frac{1}D[(4\theta-(2+\nu)A+\theta\nu^2-4\nu\theta) + (2A-2\theta(2+\nu)\\+\nu A - \nu \theta (2+\nu))cos(\phi)+8\nu\theta cos^2(\phi)].
\end{gather*}
Notice,  $$4\theta-(2+\nu)A+\theta\nu^2-4\nu\theta = 4(2\theta-1)+\nu^2(2\theta+1),$$ and the cosine coefficient simplifies to $$2A-2\theta(2+\nu)+\nu A - \nu \theta (2+\nu) = 4(1-2\theta)-\nu^2 (2\theta+1)-8\theta \nu.$$ Therefore 
\begin{equation*}
    Re( \frac{\rho(\zeta)}{\sigma(\zeta)} )= \frac{1}D[4(2\theta-1)+\nu^2(2\theta+1)-[4(2\theta-1)+\nu^2(2\theta+1)+8\nu\theta]cos(\phi) + 8\nu\theta cos^2(\phi)].
\end{equation*}
Observe that $\phi=0$ implies $\zeta=1$ and recall $\rho(1)=0$. Therefore $\phi=0$ will make the expression 0, which indicates that the expression will factor with $1-cos(\phi)$ as one of the factors. We move forward with factor by grouping
\begin{equation*}
    Re( \frac{\rho(\zeta)}{\sigma(\zeta)} )= \frac{1}D[ (4(2\theta-1)+\nu^2(2\theta+1))(1-cos(\phi)) + 8\nu\theta cos(\phi)(cos(\phi)-1)].
\end{equation*}
Factoring out the $1-cos(\phi)$
\begin{equation*}
    Re( \frac{\rho(\zeta)}{\sigma(\zeta)} )= \frac{1}D[ (4(2\theta-1)+\nu^2(2\theta+1) - 8\nu\theta cos(\phi))(1-cos(\phi))].
\end{equation*}
\end{proof}\\
Note $1-cos(\phi)\geq 0$, $\forall \phi$. Hence $Re( \frac{\rho(\zeta)}{\sigma(\zeta)} )\geq 0$, $\forall \phi$ is equivalent to $$4(2\theta-1)+\nu^2(2\theta+1)-8\nu\theta cos(\phi)\geq 0, \hspace{.5cm}\forall \phi$$ which is equivalent to $$4(2\theta-1)+\nu^2(2\theta+1)-8 |\nu| \theta\geq 0.$$ Recall at the beginning of the analysis on the ratio of characteristic polynomials we multiplied by $1-\nu/2$ and then by $2$. Notice that $\nu = 2$ makes the expression $0$. We factor out $2-|\nu|$.
\begin{equation*}
    [2(2\theta-1)-|\nu|(2\theta+1)](2-|\nu|) \geq 0.
\end{equation*}
Since $-2\leq \nu \leq 2$ the requirement becomes
\begin{equation*}
    2(2\theta-1)-|\nu|(2\theta+1)\geq 0,
\end{equation*}
which is
\begin{equation*} 
    |\nu| \leq 2 \cdot \frac{2\theta-1}{2\theta+1},
\end{equation*}
or equivalently
\begin{equation*}
    \theta \geq \frac{1}2 \cdot \frac{2+|\nu|}{2-|\nu|}.
\end{equation*}
This condition implies $\theta \geq \frac{1}2$ is necessary for $A$-stability, which is consistent with Dahlquist \cite{dahlquist1979some}. When the condition is written as 
$$-2 \cdot \frac{2\theta-1}{2\theta+1}\leq \nu \leq 2 \cdot \frac{2\theta-1}{2\theta+1} $$ one accomplishes 2nd order if the right-hand side has equality. Also, when the left-hand side is satisfied, we obtain $A_0$-stability, which is shown below.
\begin{Proposition}
The $\theta$-method with time filter (\ref{21}) is $A_0$-stable if $\nu > -2 \cdot \frac{2\theta-1}{2\theta+1}$. 
\end{Proposition}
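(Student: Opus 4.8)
The plan is to combine the standard boundary‑locus (root‑locus) argument with the formula for $\rho(\zeta)/\sigma(\zeta)$ on the unit circle just proved. Write $\mathcal{S}$ for the stability region, i.e. the set of $z\in\mathbb{C}$ for which every root $\eta$ of $\rho(\eta)-z\sigma(\eta)=0$ satisfies $|\eta|\le 1$ and is simple whenever $|\eta|=1$; $A_0$‑stability is precisely $(-\infty,0]\subseteq\mathcal{S}$ (we use the standing assumption $-2\le\nu<2$, needed both for $0$‑stability and for the method to be well defined). Since $\rho-z\sigma$ is quadratic with coefficients depending continuously on $z$, its roots vary continuously, so if $z_0\in\partial\mathcal{S}$ then some root lies on $|\eta|=1$, say $\eta_0=e^{i\phi_0}$, and $z_0=\rho(e^{i\phi_0})/\sigma(e^{i\phi_0})$; here one uses that under the hypothesis $\sigma$ is zero‑free on the unit circle (its roots have product $\nu/2\ne 1$, $\sigma(1)=1$, and $\sigma(-1)\ne 0$ exactly because $(2\theta+1)\nu+4\theta-2\ne 0$), and the only common zero of $\rho,\sigma$ on the circle is $\eta=1$, which gives $z_0=0$. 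Hence $\partial\mathcal{S}\subseteq\Gamma:=\{\rho(e^{i\phi})/\sigma(e^{i\phi}):\phi\in[0,2\pi)\}$.

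Next I would locate $\Gamma\cap\mathbb{R}$. By the preceding Proposition, $Im\bigl(\rho(e^{i\phi})/\sigma(e^{i\phi})\bigr)=(2-\nu)^2\sin\phi/D$ with $D>0$, and since $\nu\ne 2$ this vanishes only at $\phi=0$ and $\phi=\pi$. For $\phi=0$, $\zeta=1$ and $\rho(1)=0$, so the point is the origin. For $\phi=\pi$ I would clear the common factor $2-\nu$ and evaluate directly: $(2-\nu)\rho(-1)=2(2+\nu)$ and $(2-\nu)\sigma(-1)=(2\theta+1)\nu+4\theta-2$, so
$$\frac{\rho(-1)}{\sigma(-1)}=\frac{2(2+\nu)}{(2\theta+1)\nu+4\theta-2}.$$
The denominator equals $(2\theta+1)\bigl(\nu+2\tfrac{2\theta-1}{2\theta+1}\bigr)$, which is positive exactly under the hypothesis $\nu>-2\tfrac{2\theta-1}{2\theta+1}$, and the numerator is positive because $-2\tfrac{2\theta-1}{2\theta+1}=-2+\tfrac{4}{2\theta+1}>-2$. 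Thus $\Gamma$ meets the real axis only at $0$ and at one strictly positive value, so $\Gamma\cap(-\infty,0)=\emptyset$. (This is where the \emph{strict} inequality is essential: at $\nu=-2\tfrac{2\theta-1}{2\theta+1}$ one has $\sigma(-1)=0$, $\Gamma$ develops a pole at $\phi=\pi$, and the reduction above is no longer valid.)

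Since the connected set $(-\infty,0)$ is disjoint from $\partial\mathcal{S}$, it lies entirely inside $\mathcal{S}$ or entirely outside, and I would decide this with the single test point $z\to-\infty$. The hypothesis together with $-2\le\nu<2$ forces $\theta>0$ (for $\theta=0$ the condition reads $\nu>2$), so $\sigma\propto 2\theta\eta^2+A\eta+\theta\nu$ (with $A=2-\nu-\theta(2+\nu)$ as above) is genuinely quadratic and the roots of $\rho-z\sigma$ converge to those of $\sigma$ as $z\to-\infty$. Applying the quadratic root‑location test to $\sigma$: the product of its roots is $\nu/2\in(-1,1)$, $\sigma(1)=1>0$, and $\sigma(-1)\propto(2\theta+1)\nu+4\theta-2>0$, hence both roots of $\sigma$ lie strictly inside the unit disk; by continuity $z\in\mathcal{S}$ for all sufficiently negative $z$, so $(-\infty,0)\subseteq\mathcal{S}$. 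Finally, at $z=0$ one has $\rho(\eta)=\alpha_2(\eta-1)(\eta-\tfrac{\nu}{2})$ with $|\nu/2|<1$, so the root condition holds and $0\in\mathcal{S}$, proving $A_0$‑stability. The individual computations are all one‑line; the genuine points of care are (i) the boundary‑locus reduction, legitimate only because the hypothesis keeps $\sigma$ zero‑free on the unit circle, and (ii) selecting the correct side of $\Gamma$ via the behavior at $-\infty$, which requires the roots of $\sigma$ to be inside the disk — these are the two places where a slip would go unnoticed.

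As a consistency check, the $A$‑stable range $|\nu|\le 2\tfrac{2\theta-1}{2\theta+1}$ of the earlier Proposition (nonempty only for $\theta\ge\tfrac12$) sits inside the $A_0$‑stable range, as it must. A more self‑contained alternative avoids $\Gamma$ altogether: for $z\le 0$ the quadratic $q(\eta):=(2-\nu)\bigl(\rho(\eta)-z\sigma(\eta)\bigr)=2(1-z\theta)\eta^2-[(2+\nu)+zA]\eta+\nu(1-z\theta)$ has positive leading coefficient, and the Schur–Cohn criterion reduces $A_0$‑stability to $q(1)=-z(2-\nu)\ge 0$, $q(-1)=2(2+\nu)-z\bigl[(2\theta+1)\nu+4\theta-2\bigr]\ge 0$, and $|\nu/2|\le 1$ — three inequalities that hold precisely when $-2\le\nu<2$ and $\nu>-2\tfrac{2\theta-1}{2\theta+1}$, with strictness (hence the root condition, since for $z<0$ the imaginary‑part formula shows no root can sit on the circle) under the stated strict hypothesis.
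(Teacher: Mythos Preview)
Your argument follows the same boundary-locus strategy as the paper: both identify that the curve $\Gamma=\{\rho(e^{i\phi})/\sigma(e^{i\phi})\}$ meets the real axis only at $\phi=0$ (giving the origin) and $\phi=\pi$, and both reduce the question to the sign of $\rho(-1)/\sigma(-1)=\dfrac{2(2+\nu)}{(2\theta+1)\nu+4\theta-2}$. The paper obtains this value by specializing the general real-part formula from the preceding Proposition at $\phi=\pi$, whereas you simply evaluate $\rho(-1)$ and $\sigma(-1)$ directly --- same answer, less work.

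Where you go further is in justifying \emph{why} $\Gamma\cap(-\infty,0)=\emptyset$ actually implies $(-\infty,0]\subseteq\mathcal{S}$. The paper stops at computing the crossing point and implicitly takes for granted that the negative real axis lies on the stable side of $\Gamma$; you make this explicit via the test at $z\to-\infty$ (roots of $\sigma$ inside the disk) and the check at $z=0$. You also flag the borderline case $\nu=-2\tfrac{2\theta-1}{2\theta+1}$, where $\sigma(-1)=0$ and the boundary-locus reduction breaks down, explaining why the strict inequality is needed. Your alternative Schur--Cohn argument is a bonus the paper does not give. In short: same core idea, but your version fills in the connectivity step and the edge cases that the paper leaves implicit.
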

\begin{proof}
The imaginary part of $\frac{\rho(\zeta)}{\sigma(\zeta)}$ is $(2-\nu)^2 sin(\phi)$, which implies the boundary of the stability region crosses the real axis when $\phi =0$ and $\phi =\pi$. Since $\phi =0$ corresponds to the known root $\zeta =1$, we focus on the intersection at $\phi = \pi$. The result is obtained by evaluating the real part of $\frac{\rho(\zeta)}{\sigma(\zeta)}$
\begin{gather*}
    D|_{\phi = \pi}=[2\theta cos(2\pi)+A cos(\pi) +\nu\theta]^2+[2\theta sin(2\pi) +A sin(\pi)]^2 = [2\theta -A + \nu \theta]^2.
\end{gather*}
Substituting $D$ into the simplified real part we find
\begin{equation*}
    Re(\frac{\rho(\zeta)}{\sigma(\zeta)})|_{\phi=\pi} = \frac{4(2\theta-1)+\nu^2(2\theta+1)-8\nu\theta cos(\pi)](1-cos(\pi)}{[2\theta -A + \nu \theta]^2}.
\end{equation*}
By factoring the numerator we find 
\begin{equation*}
    Re(\frac{\rho(\zeta)}{\sigma(\zeta)})|_{\phi=\pi} = \frac{2(2+\nu)[(2\theta+1)\nu+2(2\theta-1)]}{[2\theta -A + \nu \theta]^2}.
\end{equation*}
Recalling the definition of $A$ we find
\begin{equation*}
    Re(\frac{\rho(\zeta)}{\sigma(\zeta)})|_{\phi=\pi} = \frac{2(2+\nu)[(2\theta+1)\nu+2(2\theta-1)]}{[(2\theta+1)\nu+2(2\theta-1)]^2}.
\end{equation*}
Thus
\begin{equation*}
    Re(\frac{\rho(\zeta)}{\sigma(\zeta)})|_{\phi=\pi} = \frac{2(2+\nu)}{(2\theta+1)\nu+2(2\theta-1)}.
\end{equation*}
Since $-2 \leq \nu \leq 2$ the condition $ Re(\frac{\rho(\zeta)}{\sigma(\zeta)})|_{\phi=\pi} \geq 0$ becomes $(2\theta+1)\nu+2(2\theta-1) > 0$ which is equivalent to $ \nu > -2\cdot \frac{2\theta-1}{2\theta+1}$.
\end{proof}
From this result observe that the forward Euler method cannot be made $A_0$-stable for any feasible values of $\nu$,
\begin{equation*}
    Re(\frac{\rho(\zeta)}{\sigma(\zeta)})|_{\phi=\pi, \theta=0} = \frac{2(2+\nu)}{\nu-2}.
\end{equation*}
One can increase the amount of the negative real axis inside the stability region by choosing $\nu$ near 2 in this case, but the local truncation error is multiplied by a factor $\frac{1}{2-\nu}$. Hence choosing $\nu$ near 2 may result in a devastating amount of error. 

\subsection{Stability Regions}
\begin{figure} [H]
\center
\includegraphics[width=0.75\textwidth]{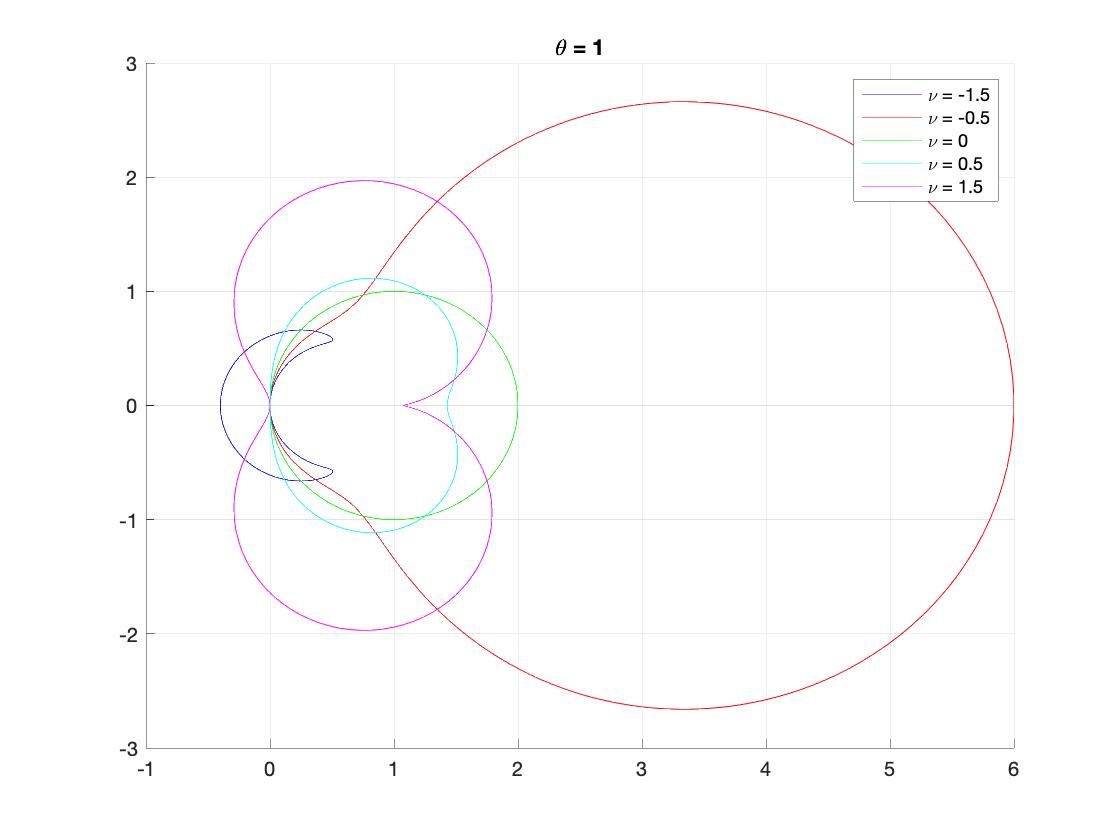}
\caption{Stability Regions for Backward Euler plus filter for several values of $\nu$. The region is outside the curves for $\nu > -\frac{2}3$ and inside for $\nu < -\frac{2}3$.}
\label{SRegionBE+Filter}
\end{figure}

\begin{figure} [H]
\includegraphics[width=0.75\textwidth]{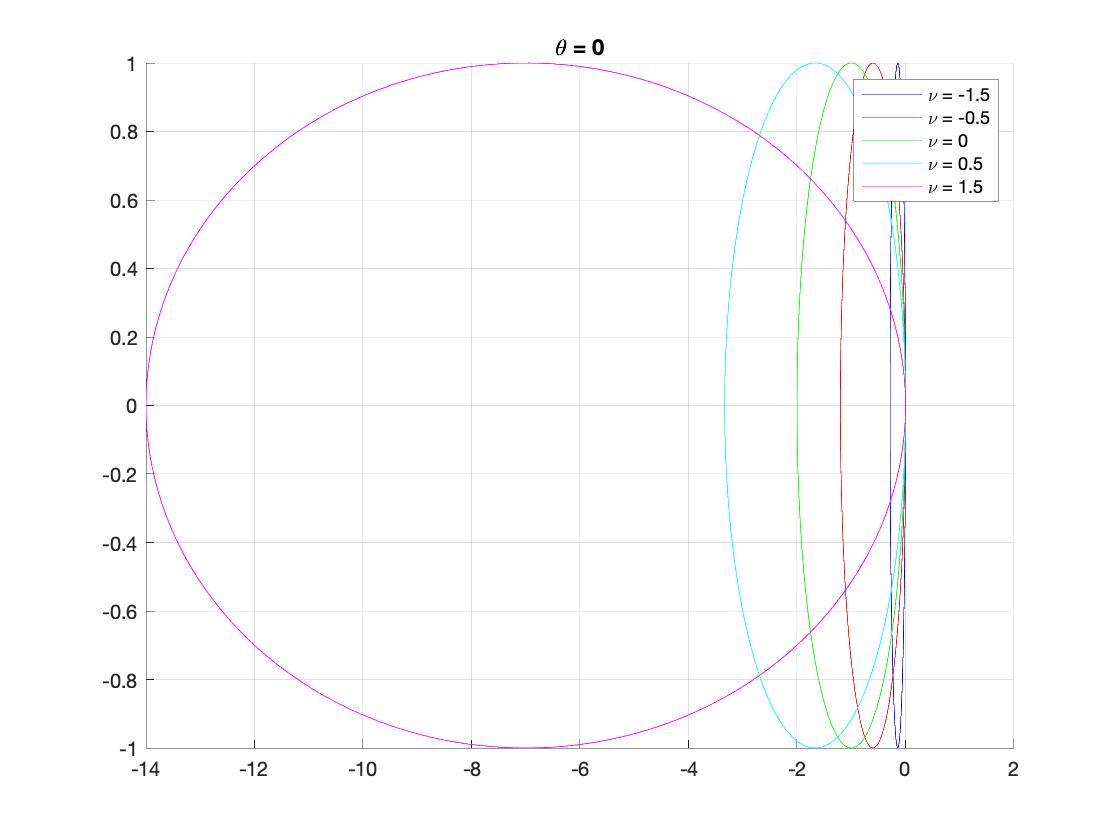}
\caption{Stability Regions for Forward Euler plus filter for several values of $\nu$. The region is inside the curves.}
\label{SRegionFE+Filter}
\end{figure}

\begin{figure} [H]
\includegraphics[width=0.75\textwidth]{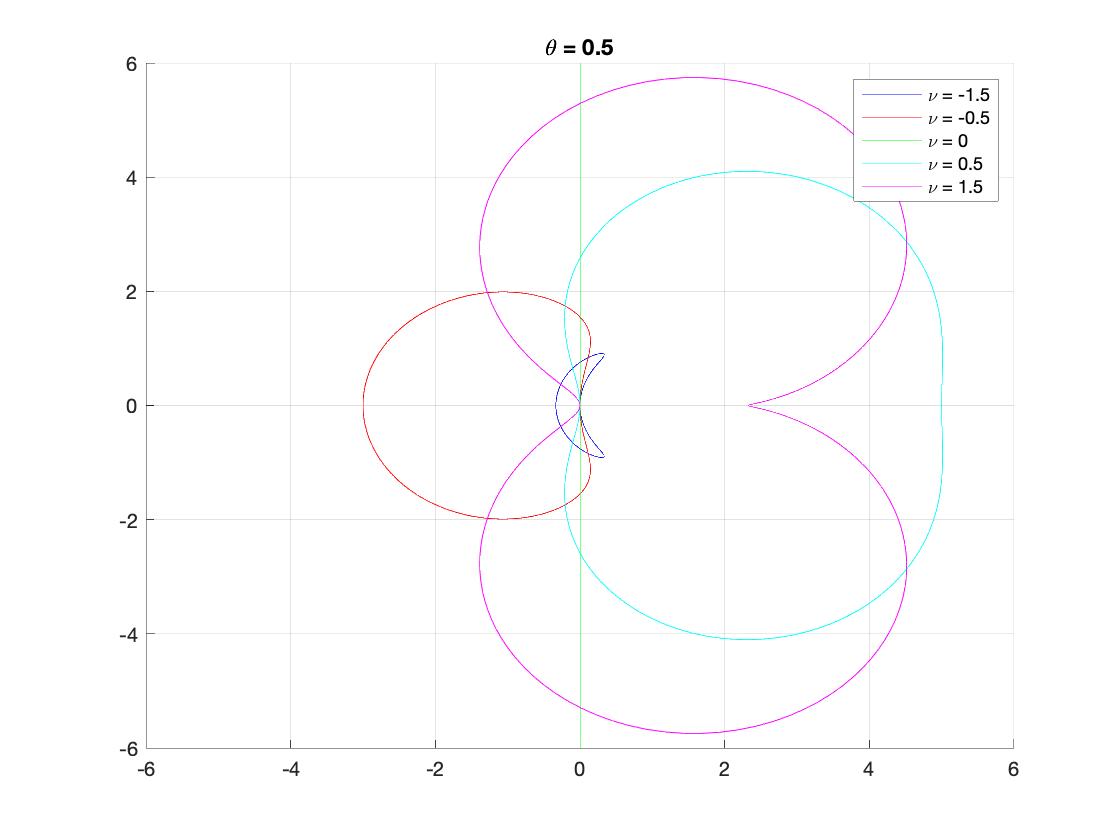}
\caption{Stability Regions for trapezoid method plus filter for several values of $\nu$. The region is inside the curves with $\nu < 0$ and outside the curves with $\nu >0$. The region for $\nu =0$ is the left half-plane.}
\label{SRegionTrap+Filter}
\end{figure}
All stability regions are consistent with the theory. 
Figure \eqref{SRegionBE+Filter} shows the method is A-stable for Backward Euler plus filter for $-\frac{2}3 < \nu < \frac{2}3$. Recall $\nu=\frac{2}3$ is 2nd order and A-stable. Note $ \frac{2}3 <\nu < 2$ is $A_0$-stable and not $A_0$-stable for $-2<\nu<\frac{-2}3$.
Figure \eqref{SRegionFE+Filter} shows the method is not A-stable or $A_0$-stable for Forward Euler plus filter for $-2 < \nu <2$. Notice the stability region grows as $\nu$ increases in size.
Figure \eqref{SRegionTrap+Filter} shows the method is A-stable for Trapezoid Rule plus filter if and only if $\nu = 0$. If $\nu >0$, then the method is $A_0$-stable. If $\nu<0$ the method is not $A_0$-stable, and the stability region shrinks as $\nu$ decreases.

\section{Variable time step}
In this section, we consider variable time step $k_n$.
 We consider $\theta$-method plus a general 3-point time filter described here:
\begin{equation} \label{41}
\begin{split}
     \text{Step 1}: y_{n+1}^* &=y_n+k_n((1-\theta) f(t_n,y_n)+k_n \theta f(t_{n+1},y_{n+1}^*)) \\
     \text{Step 2}: y_{n+1}&=y_{n+1}^*+\{a y_{n+1}^*+by_n+cy_{n-1}\}
     \end{split}
\end{equation}
\subsection{Consistency and Accuracy}
First, we study consistency and accuracy.
\begin{Proposition}\label{VariableTimeStepConsistency}
Consider the variable time step with $\tau=\frac{k_n}{k_{n-1}}$ and $\nu\neq 1+\tau$. The method \eqref{41} is consistent if and only if  $a=-\frac{\nu}{1+\tau},b=\nu, c=-\frac{\tau\nu}{1+\tau}$ for some $\nu$. Thus, the step 2 of \eqref{41} is 
\begin{equation*}
    y_{n+1}=y^*_{n+1}-\frac{\nu}{1+\tau} \left( y^*_{n+1}- (1+\tau) y_n+\tau y_{n-1}\right)
\end{equation*}
Moreover, when $ \theta = \frac{\nu + \tau + \tau^2}{2\tau (1-\nu+\tau)}$or equivalently $ \nu = \frac{\tau(1+\tau)(2\theta-1)}{2\theta\tau+1}$, \eqref{41} is second order convergent and the local truncation error (LTE) is
\begin{align*}
LTE&=\big(\frac{1+\tau+\nu\tau}{1-\nu+\tau}\frac{\tau^3}{6}-\frac{\nu\tau}{1-\nu+\tau}\frac{(1+\tau)^3}{6}+\frac{\tau-2\nu\tau+\tau^2-\nu}{(1-\nu+\tau)\tau}\frac{\tau^3}{2}\big)k_{n-1}^3y'''(t_{n+1})\\
&-(\nu+\nu\tau)\frac{\nu\tau(1+\tau)^2-\nu\tau^2(1+\tau)}{2(1-\nu+\tau)^2}k_{n-1}^3(y''(t_{n+1}))^2 +\mathcal{O}(k_{n-1}^4).
\end{align*}
\end{Proposition}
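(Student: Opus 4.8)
The plan is to mirror the constant-step argument of Proposition~\ref{p1c}, but now tracking the step-size ratio $\tau = k_n/k_{n-1}$ throughout. First I would rewrite Step 2 to solve for the unfiltered value, $y_{n+1}^* = \frac{1}{1+a}(y_{n+1} - b y_n - c y_{n-1})$, and substitute into Step 1 to obtain an equivalent two-step method with variable coefficients, exactly as in \eqref{consistency}. Inserting the exact solution $y(t)$ and Taylor-expanding $y(t_n)$ and $y(t_{n-1})$ about $t_{n+1}$ — now using $t_n = t_{n+1} - k_n$ and $t_{n-1} = t_{n+1} - k_n - k_{n-1} = t_{n+1} - (1+\tau)k_{n-1}$ — the $O(1)$ term of the LTE vanishes iff $a+b+c=0$, and the $O(k_{n-1})$ term vanishes iff $b + (1+\tau)c = 0$ (with $a \neq -1$, i.e. $\nu \neq 1+\tau$). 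Taking $b=\nu$ free then forces $c = -\tau\nu/(1+\tau)$ and $a = -\nu/(1+\tau)$, which gives the claimed form of Step 2 and establishes the consistency characterization.

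Next, with these coefficients fixed, I would substitute them back into the equivalent two-step method and push the Taylor expansion two orders further. The key quantity is $y^* - y(t_{n+1})$, where $y^* = \frac{1}{1+a}y(t_{n+1}) - \frac{b}{1+a}y(t_n) - \frac{c}{1+a}y(t_{n-1})$; because $a+b+c=0$ and $b+(1+\tau)c=0$, its expansion starts at the $k_{n-1}^2$ term, with leading coefficient proportional to $b + (1+\tau)^2 c = \nu\tau$ times $\frac{1}{2(1+a)}$. One then needs $f(t_{n+1}, y^*) = y'(t_{n+1}) + y''(t_{n+1})(y^* - y(t_{n+1})) + \tfrac12 y'''(t_{n+1})(y^*-y(t_{n+1}))^2 + \cdots$, so the quadratic term in $y^*-y(t_{n+1})$ contributes an $(y'')^2$ term at order $k_{n-1}^3$ — this is the source of the nonlinear $(y''(t_{n+1}))^2$ term in the stated LTE. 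Likewise $f(t_n, y_n) = y'(t_n)$ must be expanded to $O(k_{n-1}^2)$ about $t_{n+1}$. Collecting the $k_{n-1}^2 y''(t_{n+1})$ coefficient and setting it to zero yields the second-order condition, which after simplification is $\theta = \frac{\nu+\tau+\tau^2}{2\tau(1-\nu+\tau)}$, equivalently $\nu = \frac{\tau(1+\tau)(2\theta-1)}{2\theta\tau+1}$; substituting this back into the surviving $k_{n-1}^3$ terms produces the displayed formula for the LTE.

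The main obstacle is purely the bookkeeping in the second stage: unlike the constant-step case, every coefficient carries $\tau$-dependence, the denominators $1+a = 1 - \nu/(1+\tau) = (1-\nu+\tau)/(1+\tau)$ appear in multiple places, and one must correctly separate the linear part of the LTE (the $y'''$ terms coming from the genuinely third-order Taylor remainders of $y(t_n)$, $y(t_{n-1})$, and of $y^*-y(t_{n+1})$) from the nonlinear part (the $(y'')^2$ term coming from the quadratic Taylor term of $f$ in its second argument). I would organize the computation by writing $y^*-y(t_{n+1}) = C_2 k_{n-1}^2 y'' + C_3 k_{n-1}^3 y''' + O(k_{n-1}^4)$ with $C_2, C_3$ explicit rational functions of $\tau$ and $\nu$, then assemble $LTE$ term by term in powers of $k_{n-1}$, imposing the second-order condition only at the end to read off the $k_{n-1}^3$ coefficient. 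A useful sanity check along the way: setting $\tau = 1$ must recover the constant-step results of Proposition~\ref{p1c}, in particular $\nu = 2(2\theta-1)/(2\theta+1)$ and the nonexistence of a third-order choice (the $y'''$ coefficient cannot be made to vanish simultaneously, which is exactly the remark following Proposition~\ref{p1c}).
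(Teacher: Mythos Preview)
Your plan is exactly the paper's own argument: eliminate $y_{n+1}^*$ to obtain an equivalent two-step scheme, Taylor-expand about $t_{n+1}$, read off the consistency conditions, then push to the $k_{n-1}^2$ and $k_{n-1}^3$ terms (including the quadratic Taylor term of $f$ in its second argument to capture the $(y'')^2$ contribution). One small slip to fix before you carry it out: since $t_n = t_{n+1} - \tau k_{n-1}$, the $b$-terms in the Taylor coefficients carry powers of $\tau$, so the $O(k_{n-1})$ condition is $b\tau + (1+\tau)c = 0$ (not $b + (1+\tau)c = 0$) and the $k_{n-1}^2$ coefficient of $y^*-y(t_{n+1})$ involves $b\tau^2 + (1+\tau)^2 c$ --- your stated conclusions $c=-\tau\nu/(1+\tau)$ and leading coefficient $\propto \nu\tau$ are correct, but they follow from the corrected formulas, not the ones you wrote.
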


\begin{proof}
 Rewriting the $Step\ 2$ we get $y_{n+1}^*=\frac{1}{1+a}(y_{n+1}-by_n-cy_{n-1})$. Putting this in $Step\ 1$, we get the following
\begin{equation}\label{variable}
\begin{aligned}
   &\frac{1}{1+a}y_{n+1}-\frac{1+a+b}{1+a}y_n-\frac{c}{1+a}y_{n-1}\\&=k_n(1-\theta)f(t_n,y_n)+k_n\theta f(t_{n+1},\frac{1}{1+a}y_{n+1}-\frac{b}{1+a}y_n-\frac{c}{1+a}y_{n-1}).
   \end{aligned}
\end{equation}
Let $$y^*=\frac{1}{1+a}y(t_{n+1})-\frac{b}{1+a}y(t_n)-\frac{c}{1+a}y(t_{n-1}).$$
Hence using Taylor Expansion, we get  
\begin{align*}
    f(t_{n+1},y^*)&=f(t_{n+1},y(t_{n+1}))+\frac{\partial f}{\partial y}(t_{n+1},y(t_{n+1}))(y^*-y(t_{n+1}))\\
     &+\frac{1}{2}\frac{\partial^2 f}{\partial y^2}(t_{n+1},y(t_{n+1}))(y^*-y(t_{n+1}))^2
     +\frac{1}{3!}\frac{\partial^3 f}{\partial y^3}(t_{n+1},y(t_{n+1}))(y^*-y(t_{n+1}))^3\\
     &+\cdots+\frac{1}{n!}\frac{\partial^n f}{\partial y^n}(t_{n+1},y(t_{n+1}))(y^*-y(t_{n+1}))^n+\cdots.
\end{align*}
Notice 
\begin{align*}
    y^*-y(t_{n+1})&=\frac{1}{1+a}y(t_{n+1})-\frac{b}{1+a}y(t_n)-\frac{c}{1+a}y(t_{n-1})-y(t_{n+1}),\\
    &=-\frac{a}{1+a}y(t_{n+1})-\frac{b}{1+a}y(t_n)-\frac{c}{1+a}y(t_{n-1}).
\end{align*}
Let $\tau=\frac{k_n}{k_{n-1}}$. By doing Taylor expansion, we get
\begin{equation*}
    \begin{array}{ll}
         y(t_n)&=y(t_{n+1})-k_n y'(t_{n+1})+\frac{k_n^2}{2}y''(t_{n+1})+\frac{(-k_n)^3}{3!}y^{(3)}(t_{n+1})+\mathcal{O}(k_{n-1}^4),\\
         &=y(t_{n+1})-k_{n-1}\tau y'(t_{n+1})+\frac{k_{n-1}^2\tau^2}{2}y''(t_{n+1})+\frac{(-k_{n-1}\tau)^3}{3!}y^{(3)}(t_{n+1})+\mathcal{O}(k_{n-1}^4).
    \end{array}
\end{equation*}
and
\begin{equation*}
    \begin{array}{ll}
         y(t_{n-1})&=y(t_{n+1})-(k_n+k_{n-1}) y'(t_{n+1})+\frac{(k_n+k_{n-1})^2}{2}y''(t_{n+1})\\&+\frac{(-k_n-k_{n-1})^3}{3!}y^{(3)}(t_{n+1})+\mathcal{O}(k_{n-1}^4),  \\
         &= y(t_{n+1})-k_{n-1}(1+\tau) y'(t_{n+1})+\frac{(k_{n-1}(1+\tau))^2}{2}y''(t_{n+1})\\&+\frac{-(k_{n-1}(1+\tau))^3}{3!}y^{(3)}(t_{n+1})+\mathcal{O}(k_{n-1}^4).
    \end{array}
\end{equation*}
and
\begin{equation*}
\begin{array}{ll}
         f(t_n,y(t_n))=y'(t_n)&=y'(t_{n+1})-k_n y''(t_{n+1})+\frac{k_n^2}{2}y^{(3)}(t_{n+1})-\frac{k_n^3}{3!}y^{(4)}(t_{n+1})+\mathcal{O}(k_{n-1}^4),  \\
         &=y'(t_{n+1})-k_{n-1}\tau y''(t_{n+1})+\frac{k_{n-1}^2\tau^2}{2}y^{(3)}(t_{n+1})\\&-\frac{k_{n-1}^3\tau^3}{3!}y^{(4)}(t_{n+1})+\mathcal{O}(k_{n-1}^4).
\end{array}
\end{equation*}
Hence we get 
\begin{align*}
    y^*-y(t_{n+1})&=-\frac{a+b+c}{1+a}y(t_{n+1})+\frac{b\tau+c(1+\tau)}{1+a}k_{n-1}y'(t_{n+1})-\frac{b\tau^2+c(1+\tau)^2}{2(1+a)}k_{n-1}^2y''(t_{n+1})\\
     &+\frac{1}{3!}\frac{b\tau^3+c(1+\tau)^3}{(1+a)}k_{n-1}^3 y^{(3)}(t_{n+1})+\cdots\\
     &+(-1)^{N+1}\frac{1}{N!}\frac{b\tau^N+c(1+\tau)^N}{(1+a)}k_{n-1}^Ny^{(N)}(t_{n+1})+\mathcal{O}(k_{n-1}^{N+1}).
\end{align*}
Insert the exact solution in \eqref{variable} to get the local truncation error, 

\begin{align*}
LTE&=\frac{1}{1+a}y(t_{n+1})-\frac{1+a+b}{1+a}y(t_{n})-\frac{c}{1+a}y(t_{n-1})-k_n(1-\theta)y'(t_n)-k_n\theta f(t_{n+1},y^*), \\
    &=\Big(\frac{1}{1+a}-\frac{1+a+b}{1+a}-\frac{c}{1+a}\Big)y(t_{n+1})+\Big(\frac{1+a+b}{1+a}\tau+\frac{c}{1+a}(1+\tau)-\tau\Big)k_{n-1}y'(t_{n+1})\\&+\mathcal{O}(k_{n-1}^2).
\end{align*}
 To prove the method is consistent, we need to have
 \begin{equation*}
     \Big(\frac{1}{1+a}-\frac{1+a+b}{1+a}-\frac{c}{1+a}\Big)=0, \Big(\frac{1+a+b}{1+a}\tau+\frac{c}{1+a}(1+\tau)-\tau\Big)=0 
 \end{equation*} which implies two conditions $a+b+c=0, b\tau+c+c\tau=0$ and $a\neq -1$. If we take $b=\nu$ as free variable, we get $a=-\frac{\nu}{1+\tau}, c=-\frac{\tau\nu}{1+\tau}$.\\
 Thus we get the consistent equivalent linear multistep method as
 \begin{gather*}
     \frac{1+\tau}{1+\tau-\nu} y_{n+1} - \frac{1+\tau+\nu \tau}{1+\tau -\nu} y_n +\frac{\tau \nu}{1+\tau-\nu} y_{n-1} \\= k_n (1-\theta) f(t_n,y_n)  + k_n \theta f(t_{n+1}, \frac{1+\tau}{1+\tau-\nu} y_{n+1} - \frac{\nu+\nu \tau}{1+\tau -\nu} y_n +\frac{\tau \nu}{1+\tau-\nu} y_{n-1}).
 \end{gather*}
 We need to investigate for higher order convergence. We already have $b=\nu$, $a=-\frac{\nu}{1+\tau},\ c=-\frac{\tau\nu}{1+\tau}$ for consistency and therefore
 \begin{align*}
    y^*-y(t_{n+1})&=\frac{\nu\tau(1+\tau)^2-\nu\tau^2(1+\tau)}{2(1-\nu+\tau)}k_{n-1}^2y''(t_{n+1})-\frac{1}{3!}\frac{\nu\tau^3(1+\tau)-\nu\tau(1+\tau)^2}{2(1-\nu+\tau)}k_{n-1}^3 y^{(3)}(t_{n+1})\\&+\cdots+(-1)^{N+1}\frac{1}{N!}\frac{\nu\tau^N(1+\tau)-\nu\tau(1+\tau)^N}{1-\nu+\tau}k_{n-1}^N y^{(N)}(t_{n+1})+\mathcal{O}(k_{n-1}^{N+1}).
\end{align*}
and
 \begin{align*}
    f(t_{n+1},y^*)&=y'(t_{n+1})+y''(t_{n+1})(y^*-y(t_{n+1}))+\frac{1}{2}y'''(t_{n+1})(y^*-y(t_{n+1}))^2\\
     &+\frac{1}{3!}y^{(4)}(t_{n+1})(y^*-y(t_{n+1}))^3+\cdots+\frac{1}{n!}y^{(n+1)}(t_{n+1}))(y^*-y(t_{n+1}))^n+\cdots.
\end{align*}
The local truncation errors simplifies to
\begin{align*}
LTE&=\frac{1+\tau}{1-\nu+\tau}y(t_{n+1})-\frac{1+\tau+\nu\tau}{1-\nu+\tau}y(t_{n})+\frac{\nu\tau}{1-\nu+\tau}y(t_{n-1})\\&-k_{n-1}\tau(1-\theta)y'(t_n)-k_{n-1}\tau\theta f(t_{n+1},y^*),\\
&=\big(-\frac{1+\tau+\nu\tau}{1-\nu+\tau}\frac{\tau^2}{2}+\frac{\nu\tau}{1-\nu+\tau}\frac{(1+\tau)^2}{2}+(1-\theta)\tau^2\big)k_{n-1}^2y''(t_{n+1})+\mathcal{O}(k_{n-1}^3),\\
&=\frac{1}{2}\big(\frac{(\nu+2\nu\tau-\tau-\tau^2)\tau}{1-\nu+\tau}+2(1-\theta)\tau^2\big)k_{n-1}^2y''(t_{n+1})+\mathcal{O}(k_{n-1}^3).
\end{align*}
 To get second order convergence, we need to have $\Big(\frac{(\nu+2\nu\tau-\tau-\tau^2)\tau}{1-\nu+\tau}+2(1-\theta)\tau^2\Big)=0$ which implies $ \theta = \frac{\nu + \tau + \tau^2}{2\tau (1-\nu+\tau)}$ or equivalently $ \nu = \frac{\tau(1+\tau)(2\theta-1)}{2\theta\tau+1}$. In this case, we find
\begin{align*}
LTE&=\frac{1+\tau}{1-\nu+\tau}y(t_{n+1})-\frac{1+\tau+\nu\tau}{1-\nu+\tau}y(t_{n})+\frac{\nu\tau}{1-\nu+\tau}y(t_{n-1})\\&-k_{n-1}\tau(1-\theta)y'(t_n)-k_{n-1}\tau\theta f(t_{n+1},y^*),\\
&=\big(\frac{1+\tau+\nu\tau}{1-\nu+\tau}\frac{\tau^3}{6}-\frac{\nu\tau}{1-\nu+\tau}\frac{(1+\tau)^3}{6} - (1-\theta)\frac{\tau^3}{2}\big)k_{n-1}^3y'''(t_{n+1})\\
&-\theta \tau\frac{\nu\tau(1+\tau)^2-\nu\tau^2(1+\tau)}{2(1-\nu+\tau)}k_{n-1}^3(y''(t_{n+1}))^2 +\mathcal{O}(k_{n-1}^4),\\
&=\big(\frac{1+\tau+\nu\tau}{1-\nu+\tau}\frac{\tau^3}{6}-\frac{\nu\tau}{1-\nu+\tau}\frac{(1+\tau)^3}{6} -\frac{\tau-2\nu\tau+\tau^2-\nu}{2(1-\nu+\tau)\tau}\frac{\tau^3}{2}\big)k_{n-1}^3y'''(t_{n+1})\\
&-(\nu+\nu\tau)\frac{\nu\tau(1+\tau)^2-\nu\tau^2(1+\tau)}{2(1-\nu+\tau)^2}k_{n-1}^3(y''(t_{n+1}))^2 +\mathcal{O}(k_{n-1}^4).
\end{align*}
which can be further simplified to 
\begin{align*}
LTE &=-\frac{\tau (\nu (2+3\tau)+\tau^2(\tau+1))}{12(1+\tau-\nu)}k_{n-1}^3y'''(t_{n+1})\\
&-\frac{\nu\tau (\nu+\tau+\tau^2)(\tau+1)}{4(1+\tau-\nu)^2} k_{n-1}^3(y''(t_{n+1}))^2 +\mathcal{O}(k_{n-1}^4).\\
\end{align*}
From here, it is clear that there is no choice of $\nu$ that will make both $\mathcal{O}(k_{n-1}^3)$ terms equal to 0, the method cannot achieve 3rd order. 
 \end{proof}

 \subsection{Variable time step: Stability} \label{VariableTimeStepStability}
 To maintain the consistency of \eqref{41}, we consider the following
\begin{equation}\label{variabletime}
\begin{split}
      \text{Step 1}: y_{n+1}^* &=y_n+k_n((1-\theta) f(t_n,y_n)+\theta f(t_{n+1},y_{n+1}^*)) \\
     \text{Step 2}:y_{n+1}&=y^*_{n+1}+\frac{-\nu}{1+\tau}y^*_{n+1}+\nu y_n-\frac{\nu\tau}{1+\tau}y_{n-1}
     \end{split}
\end{equation}
We can derive a linear multistep method from \eqref{variabletime}
\begin{equation}\label{vlinear}
   \alpha_2 y_{n+1} +\alpha_1 y_{n}+\alpha_0 y_{n-1}=k_n(1-\theta) f(t_n,y_n)+k_n\theta f(t_{n+1},\beta_2 y_{n+1}+\beta_1 y_{n}+\beta_0 y_{n-1}) 
\end{equation}
This corresponds to the linear multistep method \eqref{vlinear} with coefficients
\begin{equation*}
    \begin{array}{ccc}
         \alpha_2=\frac{1+\tau}{1+\tau-\nu}, & \alpha_1=-\frac{1+\tau+\tau\nu}{1+\tau-\nu}, & \alpha_0=\frac{\tau\nu}{1+\tau-\nu},\\
         \beta_2=\frac{1+\tau}{1+\tau-\nu}, &\beta_1=-\frac{\nu+\tau\nu}{1+\tau-\nu}, &\beta_0=\frac{\tau\nu}{1+\tau-\nu}. 
    \end{array}
\end{equation*}

\begin{Proposition}
The method \eqref{variabletime} is 0-stable for
$-\frac{1+\tau}{\tau}\leq \nu < \frac{1+\tau}{\tau}$ and A-stable for  $\theta\geq \frac{1}{2}$, $\tau>0$ and  $$
- \frac{(2\theta-1)(1+\tau)}{1+2\theta\tau}\leq \nu\leq \min\{1+\tau,\frac{(2\theta-1)(1+\tau)}{(1+2\theta)\tau}\}
.$$
\end{Proposition}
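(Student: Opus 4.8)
The plan is to reuse the constant-step template essentially verbatim. For $0$-stability I would work with the first characteristic polynomial $\rho(\eta)=\alpha_2\eta^2+\alpha_1\eta+\alpha_0$ of the equivalent linear multistep method \eqref{vlinear}, using the coefficients listed immediately after it. Since $\alpha_2+\alpha_1+\alpha_0=0$, the number $\eta=1$ is always a root, and because the product of the two roots equals $\alpha_0/\alpha_2=\frac{\tau\nu}{1+\tau}$, the other root is exactly $\frac{\tau\nu}{1+\tau}$. The root condition (all roots in the closed unit disc, those on the boundary simple) therefore reduces to $-1\le\frac{\tau\nu}{1+\tau}<1$; the strict inequality on the right also keeps this root away from $\eta=1$, equivalently $\nu\ne\frac{1+\tau}{\tau}$, which is already assumed. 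This is precisely $-\frac{1+\tau}{\tau}\le\nu<\frac{1+\tau}{\tau}$.

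For $A$-stability I would insert $y'=\lambda y$ into \eqref{vlinear} and apply Dahlquist's Lemma \ref{A-stableLemma} to the resulting two-step method, whose generating polynomials are $\rho$ as above and the effective polynomial $\sigma(\eta)=(1-\theta)\eta+\theta(\beta_2\eta^2+\beta_1\eta+\beta_0)=\theta\beta_2\eta^2+(1-\theta+\theta\beta_1)\eta+\theta\beta_0$, exactly as in the constant-step proof after \eqref{twostep}. I would then form the three Dahlquist parameters $c=-\alpha_1$, $b=1-2(1-\theta+\theta\beta_1)$, $a+c=2(\theta\beta_2-\theta\beta_0)$ and impose $a,b,c\ge0$. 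The condition $c=-\alpha_1=\frac{1+\tau+\tau\nu}{1+\tau-\nu}\ge0$ already forces $1+\tau-\nu>0$ (the ``both-negative'' branch is empty) and is equivalent to $\nu\ge-\frac{1+\tau}{\tau}$ together with $\nu<1+\tau$. With $1+\tau-\nu>0$ in hand, the remaining two inequalities clear their common denominator with no sign ambiguity: I expect $b\ge0$ to become $(2\theta-1)(1+\tau)+(1+2\theta\tau)\nu\ge0$, i.e. $\nu\ge-\frac{(2\theta-1)(1+\tau)}{1+2\theta\tau}$, and $a\ge0$ to become $(2\theta-1)(1+\tau)\ge(2\theta+1)\tau\nu$, i.e. (dividing by $(2\theta+1)\tau>0$) $\nu\le\frac{(2\theta-1)(1+\tau)}{(2\theta+1)\tau}$. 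Since for $\theta\ge\frac12$ the lower bound coming from $b$ dominates the one from $c$, the left limit is $-\frac{(2\theta-1)(1+\tau)}{1+2\theta\tau}$, while intersecting the two upper limits gives $\min\{1+\tau,\ \frac{(2\theta-1)(1+\tau)}{(2\theta+1)\tau}\}$; non-emptiness of the interval forces $2\theta-1\ge0$. The closing ``simple check'' that $a,b,c\ge0$ throughout this range, promised after Lemma \ref{A-stableLemma}, then certifies $A$-stability.

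The difficulty is entirely in the bookkeeping: carrying the factor $1+\tau-\nu$ and the ratio $\tau$ through the three rational inequalities with correct signs, confirming that \eqref{variabletime} is in the normalized form the lemma assumes (it is, being the normalization that integrates constants and linears exactly, so that $\alpha_2-\alpha_0/\tau=1$), and arguing that $a,b,c\ge0$ is jointly \emph{sufficient}, not merely necessary, so that the displayed set is exactly the $A$-stability region. As a built-in consistency check I would set $\tau=1$ and verify the bounds collapse to the constant-step region $\theta\ge\frac12$, $2-4\theta\le(2\theta+1)\nu\le4\theta-2$ of the earlier proposition; this fixes the normalization convention and exposes any sign slip.
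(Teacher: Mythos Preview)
Your proposal is correct and follows essentially the same route as the paper: factor $\rho$ to get the roots $1$ and $\tfrac{\tau\nu}{1+\tau}$ for $0$-stability, then apply Dahlquist's Lemma~\ref{A-stableLemma} to obtain the three sign conditions $-\alpha_1\ge0$, $1-2(1-\theta+\theta\beta_1)\ge0$, $2\theta(\beta_2-\beta_0)+\alpha_1\ge0$, and reduce them to the displayed interval for $\nu$. Your added remarks on the normalization, the sufficiency of $a,b,c\ge0$, and the $\tau=1$ sanity check are sound bookkeeping that the paper leaves implicit.
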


\begin{proof}
    Consider the test function $y'=\lambda y$.
    Recall Equation $\eqref{vlinear}$, we can get 
\begin{equation}\label{twostepv}
   \alpha_2 y_{n+1}+\alpha_1 y_n+\alpha_0 y_{n-1}=k_{n}(1-\theta)\lambda y_n+k_{n}\theta \lambda(\beta_2 y_{n+1}+\beta_1 y_n+\beta_0 y_{n-1}).
\end{equation}
The linear multistep method is $0-stable$ if and only if all roots $z_i$ of the associated polynomial 
  $\rho(\eta)=0$,  satisfy $|z_i|\leq 1$.\\
  It gives two roots $z_1=1,z_2=\frac{\tau\nu}{1+\tau}$. Hence, for $0$-stability, we require $-1\leq \frac{\tau\nu}{1+\tau} <1$ which implies $-\frac{1+\tau}{\tau}\leq \nu < \frac{1+\tau}{\tau}$.\\
  To prove the linear multistep method is absolutely stable, we need $|y_{n+1}|\leq |y_n|$ for those values $z=k\lambda$. 
  This corresponds to values for which all values of \eqref{eta} satisfy $|\eta|\leq 1$.
  \begin{equation}\label{eta}
      \rho(\eta)=k\lambda\sigma(\eta).
  \end{equation}
  
  The $A$-stability of general two step method is characterized in terms of their coefficients. We apply the Lemma 1 from \cite{dahlquist1979some}. The two step method \eqref{twostepv} is A-stable if
  \begin{align*}
      -\alpha_1&=\frac{1+\tau+\tau\nu}{1+\tau-\nu}\geq 0,\\
     1-2(1-\theta+\theta\beta_1)&= 1-2(1-\theta-\theta\frac{\nu+\tau\nu}{1+\tau-\nu})\geq 0,\\
     2(\theta\beta_2-\theta\beta_0)+\alpha_1&=2(\theta\frac{1+\tau}{1+\tau-\nu}-\theta\frac{\tau\nu}{1+\tau-\nu})-\frac{1+\tau+\tau\nu}{1+\tau-\nu}\geq 0.
  \end{align*}
  The first condition holds if and only if 
  \begin{equation}
      -\frac{1+\tau}{\tau}\leq \nu<1+\tau. \label{VarStability1stCondition}
  \end{equation}
  The second condition holds if 
  \begin{equation*}
      \theta \frac{1+\tau+\tau\nu}{1+\tau-\nu}\geq \frac{1}{2} \notag
  \end{equation*}
  which holds if and only if 
  \begin{equation} \label{VarStability2ndCondition}
      \nu\geq \frac{1-2\theta+\tau-2\theta\tau}{1+2\theta\tau}=\frac{(1-2\theta)(1+\tau)}{1+2\theta\tau}.
  \end{equation}
  As $\nu<1+\tau$, the third condition hold if
  \begin{equation} \label{VarStability3rdCondition}
      \nu\leq \frac{-1+2\theta-\tau+2\theta\tau}{\tau+2\theta\tau}=\frac{(2\theta-1)(1+\tau)}{(1+2\theta)\tau}.
  \end{equation}
 From \eqref{VarStability2ndCondition} and \eqref{VarStability3rdCondition}, we need the following result
 \begin{equation} \label{2nd3rd}
      \frac{(1-2\theta)(1+\tau)}{1+2\theta\tau} \leq \frac{(2\theta-1)(1+\tau)}{(1+2\theta)\tau}.
 \end{equation}
The inequality is not true for $0\leq \theta < \frac{1}2$ and $\tau>0$ since the left-hand side is positive and the right-hand side is negative. The inequality achieves equality when $\theta =\frac{1}2$ and in this case we must have $\nu=0$. Requiring $\theta > \frac{1}2$ and $\tau>0$ we see $2\theta-1 > 0. $ and $\tau +1 >0$, which allows division in \eqref{2nd3rd} to find
\begin{equation*}
    \frac{-1}{1+2\theta\tau} \leq \frac{1}{(1+2\theta)\tau},
\end{equation*}
which is clearly true since $1+2\theta \tau >0$ and $(1+2\theta)\tau >0$. Thus we impose the requirement $\theta \geq \frac{1}2$.
 Combine the results of three conditions,
 \begin{equation} \label{1stCombCond}
     \max\{-\frac{1+\tau}{\tau}, \frac{1-2\theta+\tau-2\theta\tau}{1+2\theta\tau}\}\leq \nu\leq \min\{1+\tau,\frac{-1+2\theta-\tau+2\theta\tau}{\tau+2\theta\tau}\}
 \end{equation}
 A simply calculation reveals that 
 \begin{equation*}
    - \frac{1+\tau}{\tau} < \frac{(1+\tau)(1-2\theta)}{1+2\theta\tau}
 \end{equation*}
 for $\theta, \tau > 0$. Thus the condition \eqref{1stCombCond} becomes
  \begin{equation} \label{2ndCombCond}
      \frac{1-2\theta+\tau-2\theta\tau}{1+2\theta\tau} \leq \nu\leq \min\{1+\tau,\frac{-1+2\theta-\tau+2\theta\tau}{\tau+2\theta\tau}\}.
 \end{equation}
 This restriction on $\nu$ is illustrated in figure \eqref{RestrictionOnNu} for several values of $\theta$. The region becomes larger as $\theta$ increases. 
 \begin{figure} [H]
\includegraphics[width=0.8\textwidth]{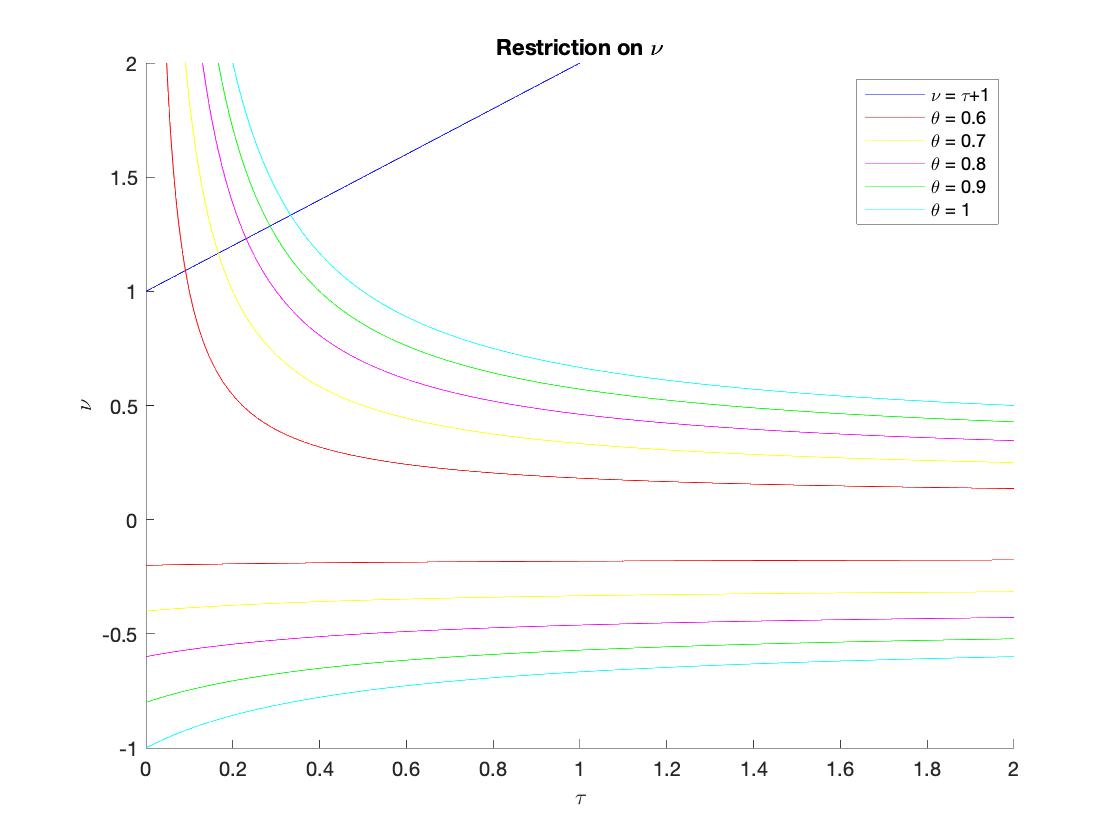}
\caption{Shows $\nu =1+\tau$ and $\nu = \frac{1-2\theta+\tau-2\theta\tau}{1+2\theta\tau}$ for different values of $\theta$, which are below the $\tau$-axis and $\nu = \frac{-1+2\theta-\tau+2\theta\tau}{\tau+2\theta\tau} $ for different values of $\theta$ which are above the $\tau$-axis. For $A$-stability, we must have the choice of $\nu$ and $\tau$ and $\theta$ be in the region below the blue line and in between two curves, determined by $\theta$, that are the same color.}
\label{RestrictionOnNu}
\end{figure}
\end{proof}
\begin{remark}
Another quick calculation can confirm that the $A$-stability restriction, \eqref{2ndCombCond}, on $\nu$ implies the $0$-stability restriction $-\frac{\tau+1}\tau < \nu < \frac{\tau+1}\tau$. For decreasing or constant timestep (i.e $0<\tau\leq0$) the method is second order and $A$-stable with the choice $\theta \geq \frac{1}2$ and $\nu = \frac{\tau (1+\tau)(2\theta-1)}{2\theta\tau+1}$. But for increasing timestep (i.e.$\tau>1$) the method cannot be both $A$-stable and 2nd order since $$ \frac{\tau (1+\tau)(2\theta-1)}{2\theta\tau+1} > \frac{(2\theta-1)(1+\tau)}{(2\theta+1)\tau}$$ in this case. 
\end{remark}
\section{Numerical Tests.}
\subsection{The Lorenz system}
Consider the Lorenz system \cite{lorenz1963deterministic}
\begin{equation*}
    \frac{dX}{dt} = 10(Y-X),\quad \frac{dY}{dt} = -XZ + 28X-Y, \quad \frac{dZ}{dt} = XY-\frac{8}{3}Z.
\end{equation*}
The initial conditions are $(X_0,Y_0,Z_0)= (0,1,0)$. The system is solved over the time interval $[0,5]$.
The reference solution is obtained by self-adaptive $RK45$. The results are shown in Figures \ref{fig1} and \ref{fig2}.

\begin{figure}[H]
    \centering
    \includegraphics[width=18cm]{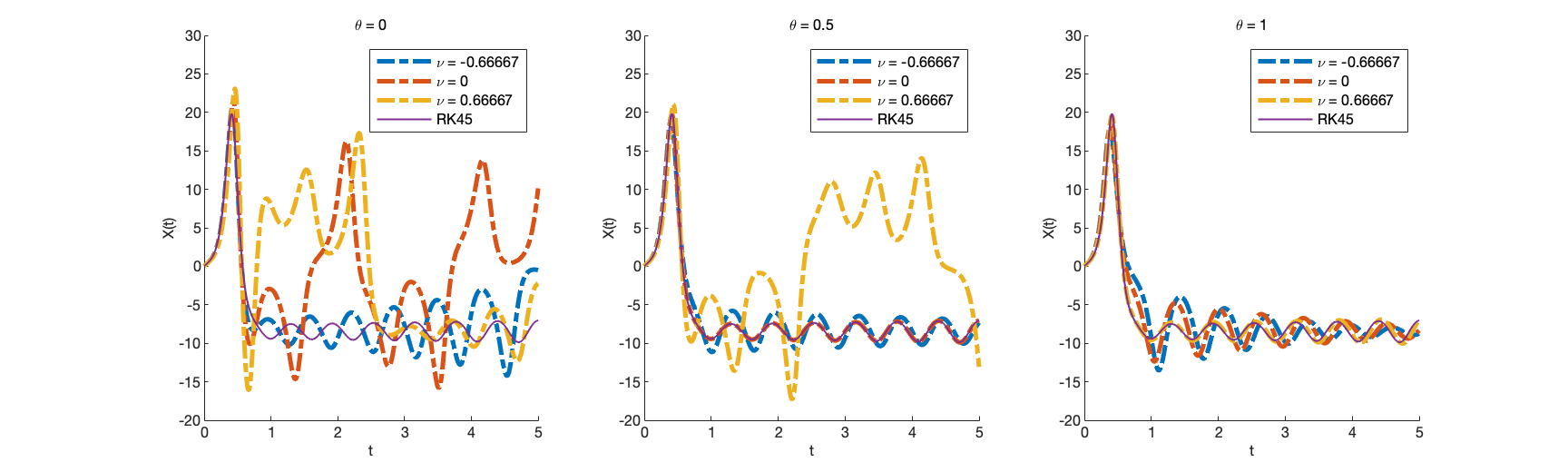}
    \caption{dt = 0.01}
    \label{fig1}
\end{figure}
\begin{figure}[H]
    \centering
    \includegraphics[width=18cm]{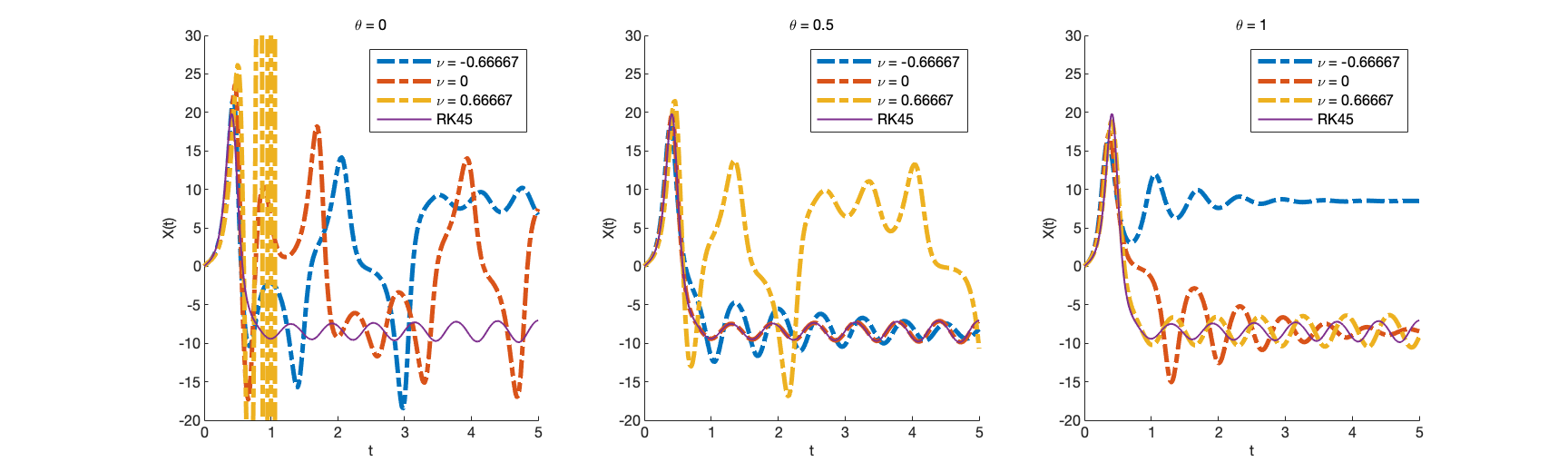}
    \caption{dt = 0.02}
    \label{fig2}
\end{figure}

\subsection{Periodic and quasi-periodic oscillations}
Consider the pendulum test problem \cite{li2015analysis,williams2013achieving} given by 
\begin{equation*}
    \frac{d\theta}{dt} = \frac{v}{L},\quad \frac{dv}{dt}=-g\sin{\theta}.
\end{equation*}
where $\theta$, $v$, $L$ and $g$ denote angular displacement, velocity along the arc, length of the pendulum and the acceleration due to gravity, respectively. Set $\theta(0)=0.9\pi$, $v(0) = 0$, $g=9.8$, time step $k =0.1$ and $L=49$.
The result are shown in Figures \ref{fig3} and \ref{fig4}.
\begin{figure}
    \centering
    \includegraphics[width=18cm]{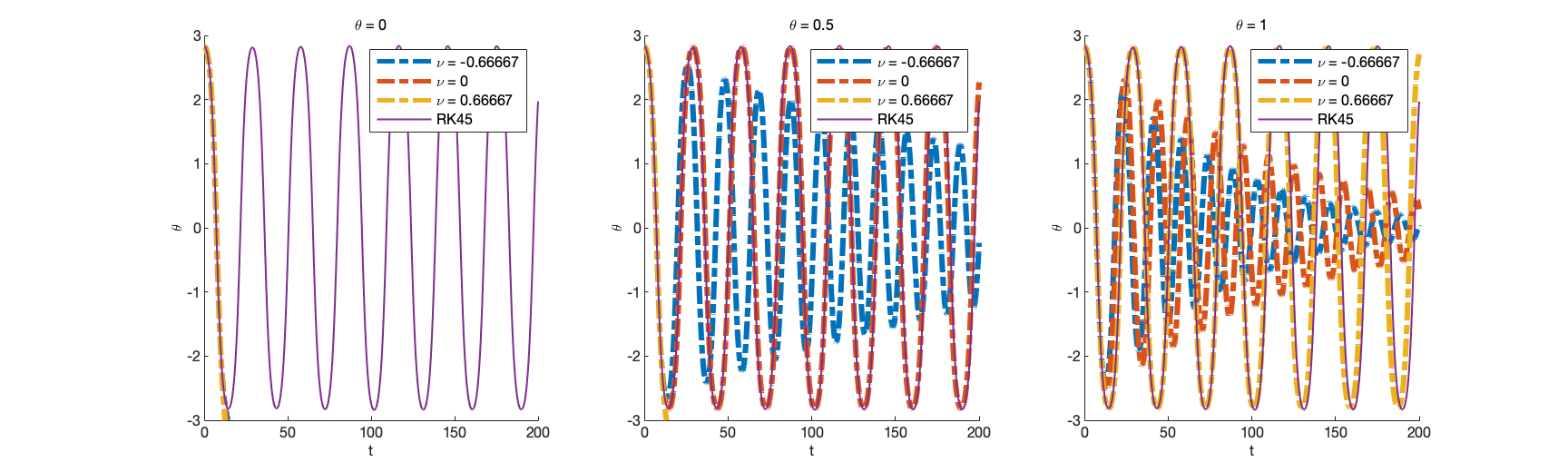}
    \caption{$\theta$ vs t}
    \label{fig3}
\end{figure}
\begin{figure}[H]
    \centering
    \includegraphics[width=18cm]{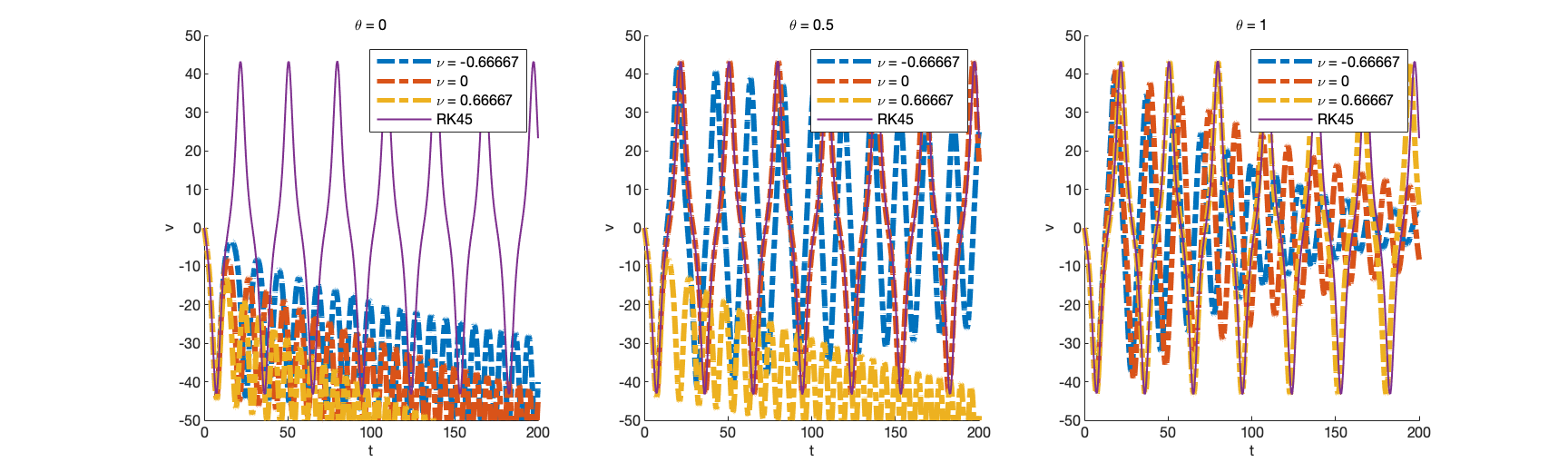}
    \caption{v vs t}
    \label{fig4}
\end{figure}
\subsection{Test problem with exact solution}
Consider the test problem
\begin{equation*}
    y' = \lambda(y-\sin{t}) +\cos{t}, \quad y(0)=1, \quad 0\geq t<1,
\end{equation*}
whose exact solution is $y(t) = e^{\lambda t} +\sin{t}$.
Consider using time step $k =0.01$ for $\lambda =10,1,0,-1,-10,-500$.
\begin{remark}
In Proposition 3.1, we showed second-order accuracy is attained when $\nu =(4\theta-2)/(2\theta+1) $.
    \begin{itemize}
    \item $\theta = 0$, $(4\theta-2)/(2\theta+1)= -2$,
    \item $\theta = 1/2$, $(4\theta-2)/(2\theta+1)= 0$,
    \item $\theta = 1$, $(4\theta-2)/(2\theta+1)= 2/3$.
    \end{itemize}
\end{remark}

\begin{figure}[H]
    \centering
    \begin{minipage}{\textwidth}
    \centering
    \includegraphics[width=18cm]{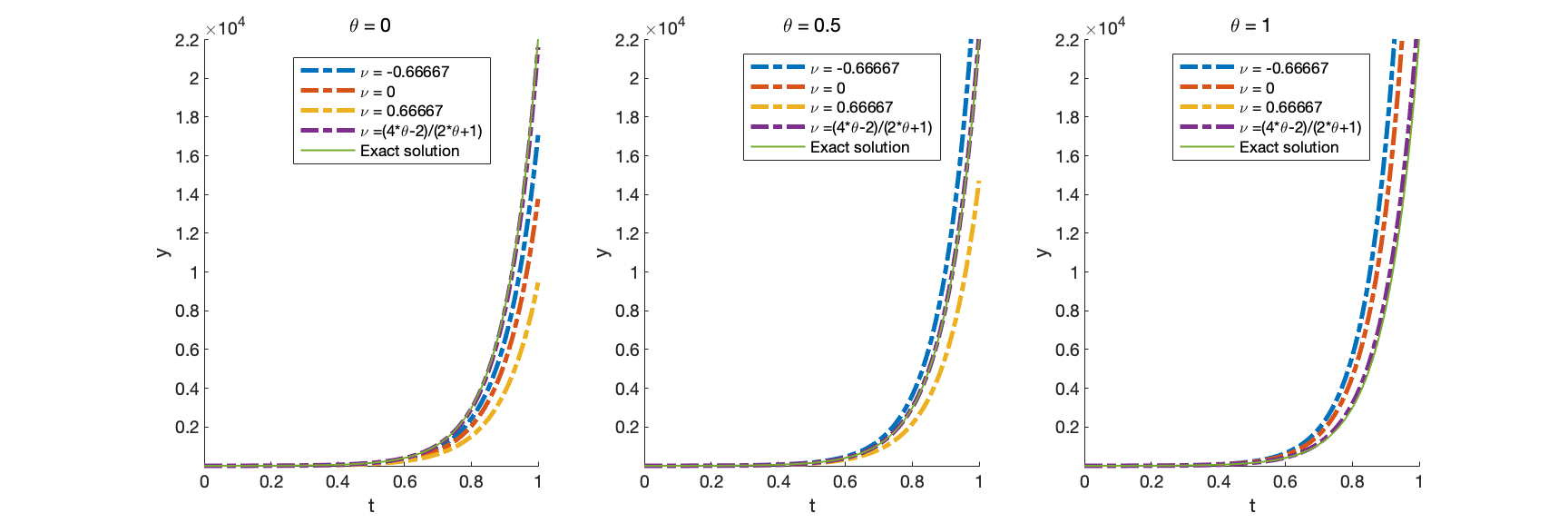}
    \caption{$\lambda = 10$}
    \end{minipage}
    \centering
    \begin{minipage}{\textwidth}
    \centering
    \includegraphics[width=18cm]{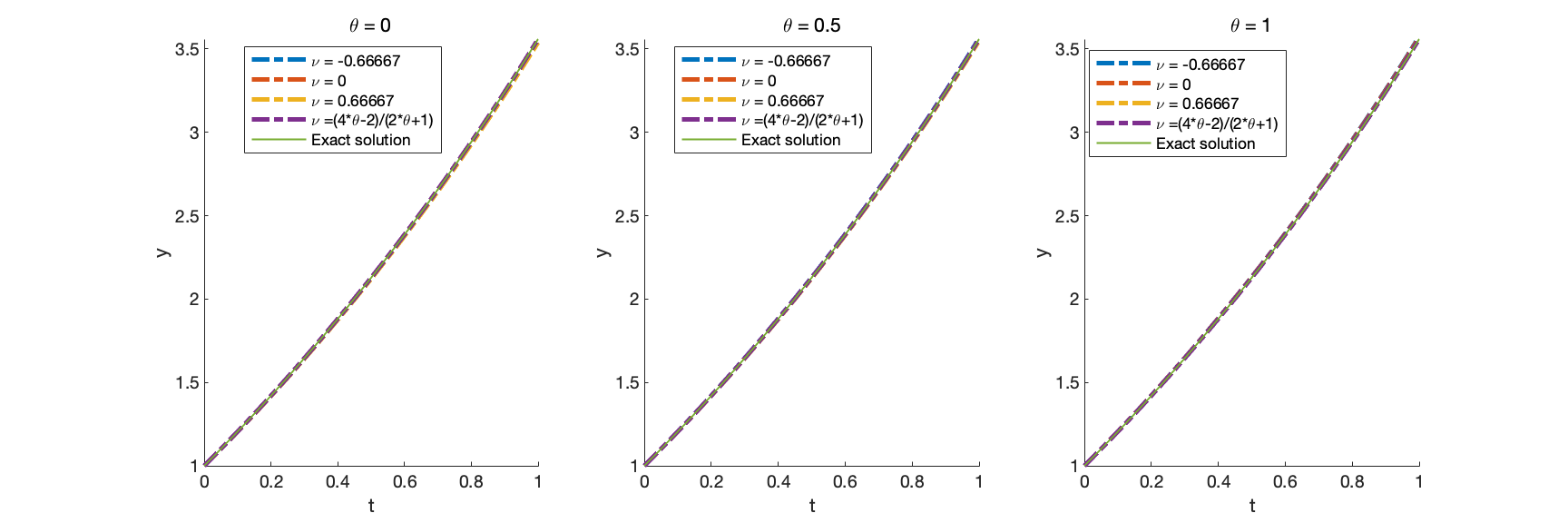}
    \caption{$\lambda = 1$}
    \end{minipage}
    \centering
    \begin{minipage}{\textwidth}
    \centering
    \includegraphics[width=18cm]{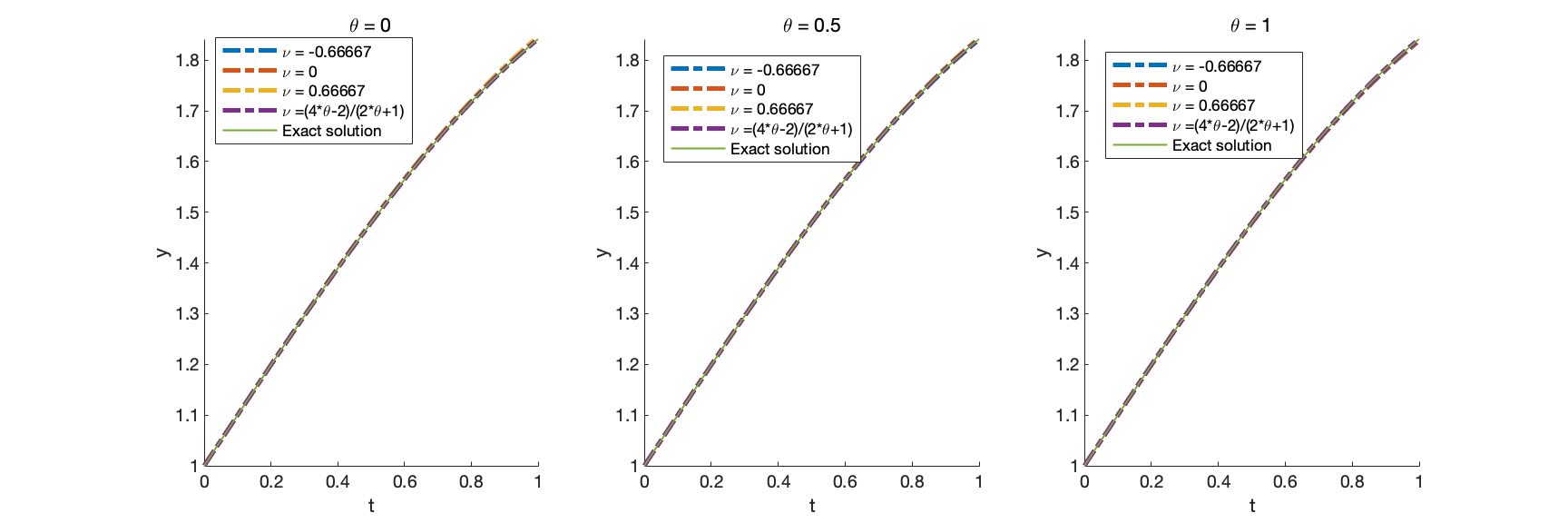}
    \caption{$\lambda = 0$}
    \end{minipage}
    \end{figure}
    \begin{figure}
    \centering
    \begin{minipage}{\textwidth}
    \centering
    \includegraphics[width=18cm]{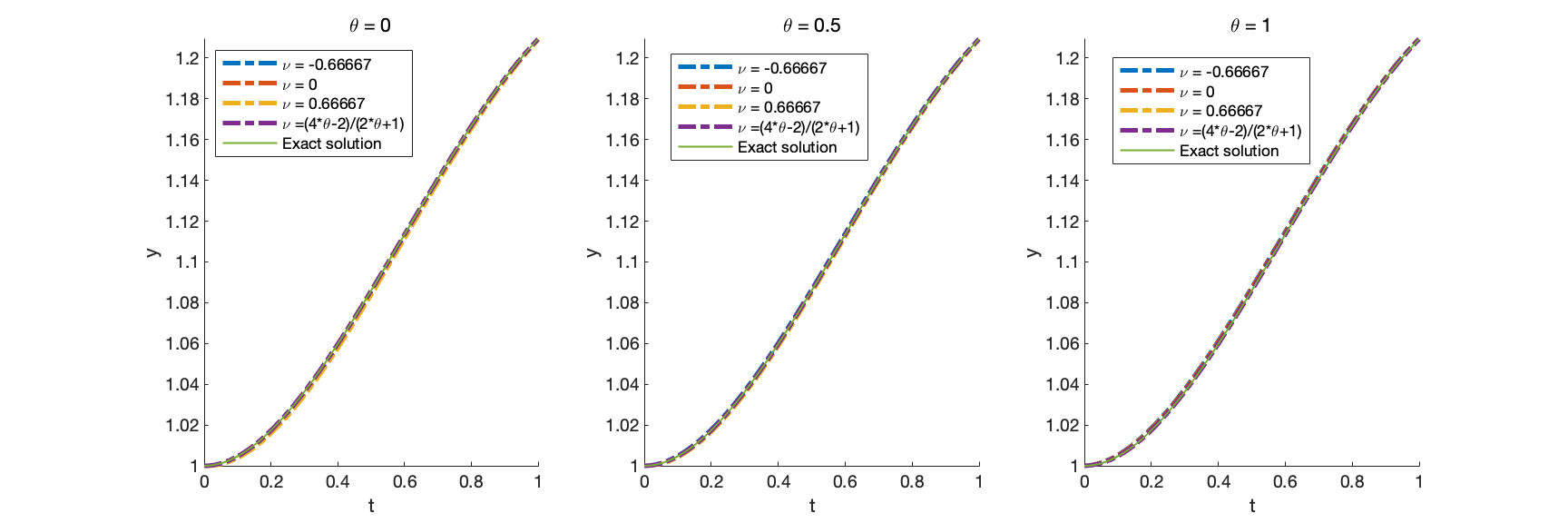}
    \caption{$\lambda = -1$}
    \end{minipage}
    \centering
    \begin{minipage}{\textwidth}
    \centering
    \includegraphics[width=18cm]{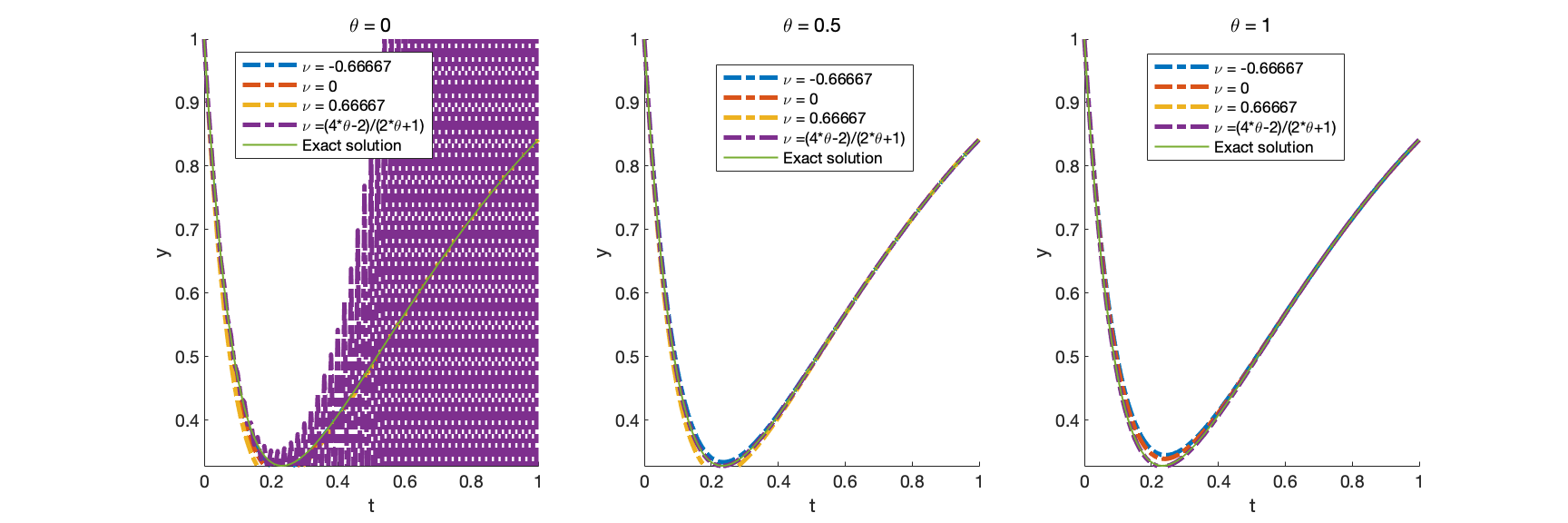}
    \caption{$\lambda = -10$}
    \end{minipage}
    \centering
    \begin{minipage}{\textwidth}
    \centering
    \includegraphics[width=18cm]{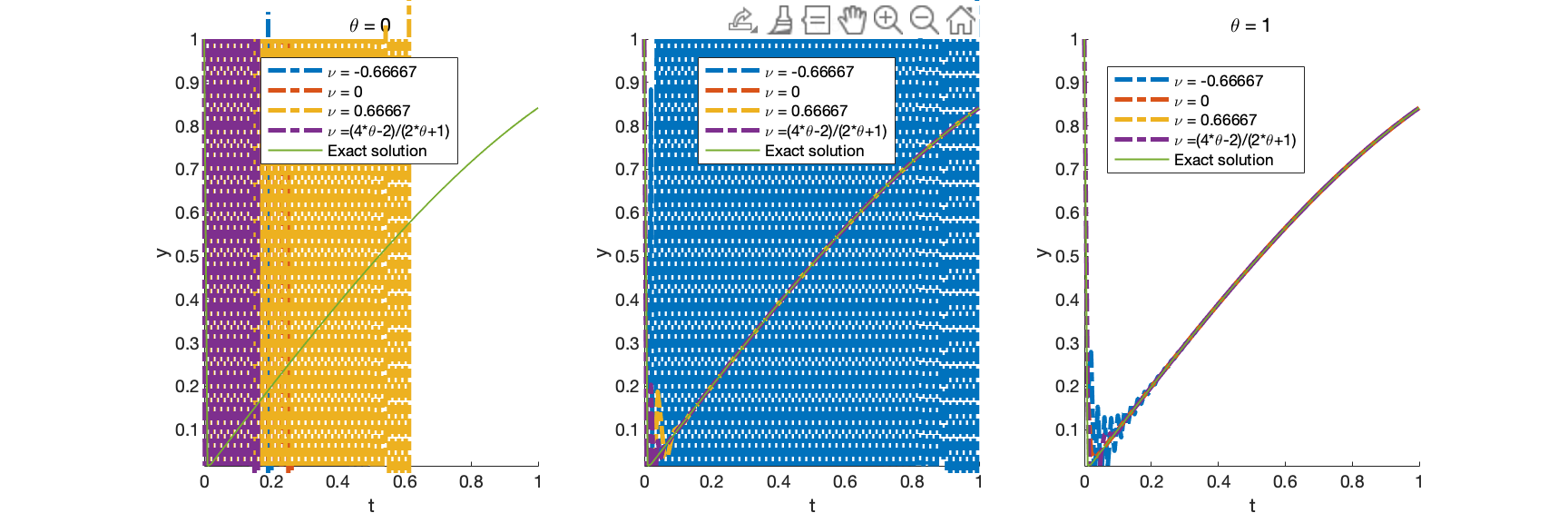}
    \caption{$\lambda = -500$}
    \end{minipage}
\end{figure}
\section{Convergence rate}
Consider the test problem
\begin{equation*}
    y' = \lambda(y-\sin{t}) +\cos{t}, \quad y(0)=1, \quad 0\geq t<1,
\end{equation*}
whose exact solution is $y(t) = e^{\lambda t} +\sin{t}$. Consider using time step $k =0.00125, 0.0025, 0.005, 0.01, 0.02$ for $\lambda =10,1,0,-1,-10$ to calculate the convergence rate. We got the same result for all cases of $\lambda$. Hence we showed for the case when $\lambda=-10$ in the following tables.
\begin{remark}
In Proposition \ref{p1c}, we get second-order accuracy when $\nu =(4\theta-2)/(2\theta+1) $.
\begin{itemize}
    \item $\theta = 0$, $(4\theta-2)/(2\theta+1)= -2$,
    \item $\theta = 1/2$, $(4\theta-2)/(2\theta+1)= 0$,
    \item $\theta = 1$, $(4\theta-2)/(2\theta+1)= 2/3$.
\end{itemize}
\end{remark}
\begin{table}[H]
\centering
\begin{tabular}{|l|l|l|l|l|l|l|l|l|}
\hline
Timestep & $\nu=-0.66667$ & Rate   & $\nu=0$ & Rate   & $\nu=0.66667$ & Rate   & $\nu=\frac{4\theta-2}{2\theta+1}$ & Rate   \\\hline
0.0013   & 4.9438e-04                  & -      & 9.8742e-04           & -      & 0.0020                     & -      & 0.1935                                                                                          & -      \\\hline
0.0025   & 9.9394e-04                  & 1.0075 & 0.0020               & 1.0054 & 0.0040                     & 1.0060 & 0.7781                                                                                          & 2.0074 \\\hline
0.0050   & 0.0020                      & 1.0158 & 0.0040               & 1.0109 & 0.0080                     & 1.0119 & 3.1372                                                                                          & 2.0115 \\\hline
0.0100   & 0.0041                      & 1.0348 & 0.0081               & 1.0225 & 0.0163                     & 1.0232 & 12.6357                                                                                         & 2.0100 \\\hline
0.0200   & 0.0087                      & 1.0849 & 0.0168               & 1.0477 & 0.0335                     & 1.0444 & 49.4689                                                                                         & 1.9690\\\hline
\end{tabular}
\caption{$\theta=0,\ \lambda=-10.$}
    \label{tab:tab1}
\end{table}
\begin{table}[H]
\centering
\begin{tabular}{|l|l|l|l|l|l|l|l|l|}
\hline
Timestep & $\nu=-0.66667$ & Rate   & $\nu=0$ & Rate   & $\nu=0.66667$ & Rate   & $\nu=\frac{4\theta-2}{2\theta+1}$ & Rate   \\\hline
0.0013   & 4.8942e-04                  & -      & 2.0649e-06           & -      & 9.8734e-04                   & -      & 2.0649e-06                                                                                         & -      \\\hline
0.0025   & 9.7398e-04                  & 0.9928 & 8.2597e-06               & 2.0001 & 0.0020                     & 1.0052 & 8.2597e-06                                                                                          & 2.0001 \\\hline
0.0050   & 0.0019                     & 0.9858 & 3.3044e-05              & 2.0002 & 0.0040                     & 1.0103 & 3.3044e-05                                                                                         & 2.0002 \\\hline
0.0100   & 0.0038                      & 0.9724 & 1.3226e-04               & 2.0009 & 0.0081                    & 1.0200 & 1.3226e-04                                                                                        & 2.0009 \\\hline
0.0200   & 0.0073                    & 0.9493 & 5.3042e-04              & 2.0037 & 0.0166                    & 1.0377 & 5.3042e-04                                                                                         & 2.0037\\\hline
\end{tabular}
\caption{$\theta=\frac{1}{2},\ \lambda=-10.$}
    \label{tab:tab2}
\end{table}

\begin{table}[H]
\centering
\begin{tabular}{|l|l|l|l|l|l|l|l|l|}
\hline
Timestep & $\nu=-0.66667$ & Rate   & $\nu=0$ & Rate   & $\nu=0.66667$ & Rate   & $\nu=\frac{4\theta-2}{2\theta+1}$ & Rate   \\\hline
0.0013   & 0.0015               & -      & 9.8017e-04           & -      & 1.8416e-05                  & -      & 1.8416e-05                                                                                       & -      \\\hline
0.0025   & 0.0029                  & 0.9945 & 0.0020               & 0.9948 & 7.2888e-05                    & 1.9847 & 7.2888e-05                                                                                         & 1.9847 \\\hline
0.0050   & 0.0058                    & 0.9892 & 0.0039             & 0.9897 & 2.8546e-04                     & 1.9695 & 2.8546e-04                                                                                       & 1.9695 \\\hline
0.0100   & 0.0115                     & 0.9786 & 0.0076               & 0.9798 & 0.0011                   & 1.9397 & 0.0011                                                                                        & 1.9397 \\\hline
0.0200   & 0.0223                    & 0.9581 & 0.0149             & 0.9615 & 0.0040                    & 1.8820 & 0.0040                                                                                          & 1.8820\\\hline
\end{tabular}
\caption{$\theta=1,\ \lambda=-10.$}
    \label{tab:tab3}
\end{table}
\section{Conclusions} Though the result for Backward Euler with time filter is known, we explored all possible choices of $\theta$ in our paper. We have shown that for different choices of $\nu$, we have different stability regions. We have shown that when $\theta>1/2$, we always have $A$-stability and when $\theta<1/2$, we have $A_0$-stability or $A_{\pi/4}$- stability.
\section*{Acknowledgement}
We want to thank our advisor Professor William J. Layton, for his insightful ideas and guidance throughout the research. We thank NSF for funding the project.
\bibliographystyle{plain}
\bibliography{references.bib}
\end{document}